\documentclass[10pt]{amsart}
\usepackage{amssymb}
\usepackage{amsthm,amsmath}
\usepackage{fancybox,epic,cite}

\title{Classifications of quasitrivial semigroups}

\author{Jimmy Devillet}
\address{Mathematics Research Unit, University of Luxembourg, Maison du Nombre, 6, avenue de la Fonte, L-4364 Esch-sur-Alzette, Luxembourg}
\email{jimmy.devillet[at]uni.lu}

\author{Jean-Luc Marichal}
\address{Mathematics Research Unit, University of Luxembourg, Maison du Nombre, 6, avenue de la Fonte, L-4364 Esch-sur-Alzette, Luxembourg}
\email{jean-luc.marichal[at]uni.lu}

\author{Bruno Teheux}\thanks{Corresponding author: Bruno Teheux is with the Mathematics Research Unit, University of Luxembourg, Maison du Nombre, 6, avenue de la Fonte, L-4364 Esch-sur-Alzette, Luxembourg.\\ Email: bruno.teheux[at]uni.lu}
\address{Mathematics Research Unit, University of Luxembourg, Maison du Nombre, 6, avenue de la Fonte, L-4364 Esch-sur-Alzette, Luxembourg}
\email{bruno.teheux[at]uni.lu}

\date{November 19, 2018}

\theoremstyle{plain}
\newtheorem{theorem}{Theorem}[section]
\newtheorem{lemma}[theorem]{Lemma}
\newtheorem{proposition}[theorem]{Proposition}
\newtheorem{corollary}[theorem]{Corollary}

\theoremstyle{definition}
\newtheorem{definition}[theorem]{Definition}

\theoremstyle{remark}

\newtheorem{remark}{Remark}

\newcommand{\Z}{\mathbb{Z}}

\begin{document}
\begin{abstract}
We investigate classifications of quasitrivial semigroups defined by certain equivalence relations. The subclass of quasitrivial semigroups that preserve a given total ordering is also investigated. In the special case of finite semigroups, we address and solve several related enumeration problems.
\end{abstract}

\keywords{Quasitrivial semigroup, classification, enumeration, group action.}

\subjclass[2010]{Primary 05A15, 20M14, 20M99; Secondary 39B52.}

\maketitle

\section{Introduction}

Let $X$ be an arbitrary nonempty set. We use the symbol $X_n$ if $X$ contains $n\geq 1$ elements, in which case we assume without loss of generality that $X_n=\{1,\ldots,n\}$. Recall that a binary operation $F\colon X^2\to X$ is said to be
\begin{itemize}
\item \emph{associative} if $F(F(x,y),z)=F(x,F(y,z))$ for all $x,y,z\in X$;
\item \emph{quasitrivial} if $F(x,y)\in\{x,y\}$ for all $x,y\in X$.
\end{itemize}

This paper focuses on the class of operations $F\colon X^2\to X$ that are both associative and quasitrivial. For such operations, the pair $(X,F)$ is then called a \emph{quasitrivial semigroup}. For recent references, see, e.g., \cite{Ack,CouDevMar2,DevKisMar}.

We let $\mathcal{F}$ be the class of associative and quasitrivial operations $F\colon X^2\to X$. We will often denote this class by $\mathcal{F}_n$ if $X=X_n$ for some integer $n\geq 1$. Although the class $\mathcal{F}$ has been completely characterized (see Theorem~\ref{thm:kimura} below), its structure can be investigated by classifying its elements into subclasses. The purpose of this paper is to define and analyze such classifications by considering natural equivalence relations. The case where $X$ is finite also raises the interesting problem of enumerating the corresponding equivalence classes.

The outline of this paper is as follows. In Section 2, we essentially recall a descriptive characterization of the class $\mathcal{F}$. In Section 3, we introduce and investigate classifications of the elements of $\mathcal{F}$ by defining three natural equivalence relations. One of these classifications is simply obtained by considering orbits (conjugacy classes) defined by letting the group of permutations on $X$ act on $\mathcal{F}$. We also focus on the finite case, where we enumerate the equivalence classes defined by each of these equivalence relations. In Section 4, we investigate the operations of $\mathcal{F}$ that are order-preserving for some total ordering on $X$. In particular, we characterize the above-mentioned orbits that contain at least one such order-preserving operation. We also elaborate on the finite case, where the enumeration problems give rise to new integer sequences. In Section 5, we examine further subclasses of $\mathcal{F}$ by considering additional properties: commutativity, anticommutativity, and bisymmetry.

Throughout this paper, the size of any finite set $S$ is denoted by $|S|$.

\section{Quasitrivial semigroups}

Recall that a \emph{weak ordering} on $X$ is a binary relation $\lesssim$ (or $\precsim$) on $X$ that is total and transitive. We write $x\sim y$ if $x\lesssim y$ and $y\lesssim x$. Also, we write $x<y$ if $x\lesssim y$ and $\neg(y\lesssim x)$. A \emph{total ordering} on $X$ is a weak ordering on $X$ that is antisymmetric. In this case, we have $x\sim y$ if and only if $x=y$. For this reason, we usually denote a total ordering by $\leq$ (or $\preceq$). In this paper we will sometimes endow $X_n$ with the usual total ordering relation $\leq_n$ defined by $1<_n\cdots <_n n$.

For any weak ordering $\lesssim$ on $X$, the relation $\sim$ is an equivalence relation on $X$ and the relation $<$ induces a total ordering on the quotient set ${X/{\sim}}$. Thus, a weak ordering on $X$ is nothing other than a totally ordered partition of $X$.

For any total ordering $\leq$ on $X$ and any weak ordering $\precsim$ on $X$, we say that $\leq$ \emph{extends} (or \emph{is subordinated to}) $\precsim$ if, for any $x,y\in X$, we have that $x\prec y$ implies $x<y$.

Given a weak ordering $\lesssim$ on $X$, the \emph{maximum} on $X$ for $\lesssim$ is the partial commutative binary operation $\max_{\lesssim}$ defined on $$X^2\setminus\{(x,y)\in X^2: x\sim y,~x\neq y\}$$ by $\max_{\lesssim}(x,y)=y$ whenever $x\lesssim y$. If $\lesssim$ reduces to a total ordering, then clearly the operation $\max_{\lesssim}$ is defined everywhere on $X^2$. Also, the \emph{projection operations} $\pi_1\colon X^2\to X$ and $\pi_2\colon X^2\to X$ are respectively defined by $\pi_1(x,y)=x$ and $\pi_2(x,y)=y$ for all $x,y\in X$.

The preimage of an element $z\in X$ under an operation $F\colon X^2\to X$ is denoted by $F^{-1}[z]$. When $X=X_n$ for some integer $n\geq 1$, we also define the \emph{preimage sequence of $F$} as the nondecreasing $n$-element sequence of the numbers $|F^{-1}[z]|$, $z\in X_n$. We denote this sequence by $|F^{-1}|$.

The following theorem provides a descriptive characterization of the class $\mathcal{F}$. As recently observed in \cite{Ack}, this characterization can be easily derived from two results from the 1950s on idempotent semigroups. A recent discussion and a direct elementary proof can be found in \cite{CouDevMar2}.

\begin{theorem}\label{thm:kimura}
We have $F\in\mathcal{F}$ if and only if there exists a weak ordering $\precsim$ on $X$ such that
$$
F|_{A\times B} ~=~
\begin{cases}
\pi_1|_{A\times B}\hspace{1.5ex}\text{or}\hspace{1.5ex}\pi_2|_{A\times B}, & \text{if $A=B$},\\
\max_{\precsim}|_{A\times B}, & \text{if $A\neq B$},
\end{cases}
\qquad \forall A,B\in {X/{\sim}}.
$$
\end{theorem}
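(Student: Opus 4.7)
The plan is to establish both directions. For sufficiency, assume $F$ has the prescribed form with respect to some weak ordering $\precsim$. Quasitriviality is immediate since the two projections and $\max_{\precsim}$ all return one of the two arguments. For associativity, I would fix $x,y,z\in X$ with $\sim$-classes $A,B,C$ and reason by cases on the multiset $\{A,B,C\}$. When $A,B,C$ are pairwise distinct, both $F(F(x,y),z)$ and $F(x,F(y,z))$ equal the $\precsim$-largest element of $\{x,y,z\}$ by the max clause. When some classes coincide, the fact that $F$ restricted to each class is a single projection makes the verification routine.

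For necessity, the natural candidate weak ordering is defined by
\[
x \precsim y \quad\Longleftrightarrow\quad y \in \{F(x,y),\, F(y,x)\}.
\]
Reflexivity follows from $F(x,x)=x$, and totality follows from quasitriviality applied to $(x,y)$: at least one of $x,y$ must appear in $\{F(x,y),F(y,x)\}$. The induced relation $\sim$ then satisfies $x\sim y$ iff $\{F(x,y),F(y,x)\}=\{x,y\}$, i.e., $F$ acts as a projection on the two off-diagonal pairs $\{(x,y),(y,x)\}$.

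The main obstacle will be proving transitivity of $\precsim$. Assuming $x\precsim y$ and $y\precsim z$, each hypothesis splits into two subcases according to which side realizes the inequality, giving four combinations to treat. For each, I would exhibit a three-fold product such as $F(F(x,y),z)$ versus $F(x,F(y,z))$ (or a suitable permutation of the arguments) and argue that if $x\precsim z$ failed, the two bracketings would disagree, contradicting associativity. This case analysis is the technical heart of the argument.

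Once $\precsim$ is known to be a weak ordering, the form of $F$ follows quickly. If $A\prec B$ with $a\in A$ and $b\in B$, then $a \precsim b$ together with $b\not\precsim a$ forces $a\notin \{F(a,b), F(b,a)\}$, so $F(a,b)=F(b,a)=b$; thus $F$ agrees with $\max_{\precsim}$ on $A\times B$ and on $B\times A$. Within a single class $A$, the definition of $\sim$ already shows that for each pair of distinct elements $a,b\in A$ the restriction of $F$ to $\{a,b\}^2$ coincides with $\pi_1$ or with $\pi_2$; a short associativity argument applied to triples in $A$ (for instance checking that the two bracketings of $F(a,c,b)$ agree when $F(a,b)=a$) then propagates this choice uniformly across $A$, giving $F|_{A\times A}\in\{\pi_1|_{A\times A},\pi_2|_{A\times A}\}$.
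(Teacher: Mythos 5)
Your proposal is correct in outline, but note that the paper does not give its own proof of Theorem~\ref{thm:kimura}: it imports the result, pointing to \cite{Ack} for a derivation from classical results on idempotent semigroups and to \cite{CouDevMar2} for a direct elementary proof. What you have written is essentially that direct elementary proof, and your candidate relation $x\precsim y\Leftrightarrow y\in\{F(x,y),F(y,x)\}$ is exactly the weak ordering that the paper records in Proposition~\ref{prop:woKI}, Eq.~\eqref{eq:1of1}, as the unique one attached to $F$; so your route is the ``from scratch'' alternative to the paper's appeal to Kimura-style structure theory, at the cost of a longer case analysis but with the benefit of producing $\precsim_F$ explicitly. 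Your transitivity plan does go through: in each of the four subcases, the failure of $x\precsim z$ forces $F(x,z)=F(z,x)=x$, and comparing the two bracketings of a well-chosen triple forces two of $x,y,z$ to coincide, which then contradicts $\neg(x\precsim z)$ (I checked all four combinations). The one place where the sketch is loose is the final step: the particular bracketing you name (the two parenthesizations of $(a,c,b)$ when $F(a,b)=a$) does not yield a contradiction in every configuration of projection choices on the pairs $\{a,b\}$, $\{a,c\}$, $\{b,c\}$ --- for instance it is consistent when $F|_{\{a,c\}^2}=\pi_2$ and $F|_{\{b,c\}^2}=\pi_1$ --- but another triple product does (here $F(F(a,b),c)=F(a,c)=c$ while $F(a,F(b,c))=F(a,b)=a$). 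So the claim that the projection choice is uniform on each $\sim$-class is correct, but a complete write-up must enumerate the few configurations and select the appropriate bracketing in each, rather than rely on a single one.
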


It is not difficult to see that the weak ordering $\precsim$ mentioned in Theorem~\ref{thm:kimura} is unique. The following proposition provides a way to construct it.

\begin{proposition}[{see \cite{CouDevMar2}}]\label{prop:woKI}
The weak ordering $\precsim$ mentioned in Theorem~\ref{thm:kimura} is uniquely determined from $F$ and is defined by
\begin{equation}\label{eq:1of1}
x\precsim y\quad\Leftrightarrow\quad F(x,y)=y \hspace{1.5ex}\text{or}\hspace{1.5ex} F(y,x)=y,\qquad x,y\in X.
\end{equation}
If $X=X_n$ for some integer $n\geq 1$, then we also have the equivalence
\begin{equation}\label{eq:of1}
x\precsim y\quad\Leftrightarrow\quad |F^{-1}[x]|\,\leq\, |F^{-1}[y]|,\qquad x,y\in X.
\end{equation}
\end{proposition}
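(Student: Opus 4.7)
The plan is to establish the two equivalences separately, since they are of different character: \eqref{eq:1of1} characterizes $\precsim$ from the local structure of $F$ given by Theorem~\ref{thm:kimura}, while \eqref{eq:of1} reconstructs $\precsim$ from a global cardinality invariant of $F$ that is available only when $X$ is finite.

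For \eqref{eq:1of1}, I would let $\precsim$ denote the unique weak ordering supplied by Theorem~\ref{thm:kimura} and verify the equivalence by a short case analysis on whether $x$ and $y$ lie in a common $\sim$-class. If they belong to a common class $A$, then $F|_{A\times A}$ is either $\pi_1|_{A\times A}$ or $\pi_2|_{A\times A}$, and in either case $\{F(x,y),F(y,x)\}=\{x,y\}$, so both directions of the right-hand side of \eqref{eq:1of1} hold, matching $x\sim y$. If instead $x$ and $y$ lie in distinct classes $A\prec B$, then Theorem~\ref{thm:kimura} forces $F(x,y)=F(y,x)=y$, giving the right-hand side of \eqref{eq:1of1} in the direction $x\precsim y$ but failing in the direction $y\precsim x$, again matching $\precsim$.

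For \eqref{eq:of1}, I would fix $z\in X_n$, let $A$ be its $\sim$-class, and compute $|F^{-1}[z]|$ by partitioning $X_n^2$ according to the $\sim$-classes of the two coordinates. Pairs in $A\times A$ contribute exactly $|A|$ regardless of whether $F|_{A\times A}$ is $\pi_1$ or $\pi_2$; on a mixed block $(A\times B)\cup(B\times A)$ with $B\neq A$ the value of $F$ is the $\precsim$-maximum, so the condition $F(a,b)=z$ forces $B\prec A$ and then admits exactly $2|B|$ solutions; all other blocks contribute nothing. This yields the closed form
\[
|F^{-1}[z]| ~=~ |A| ~+~ 2\sum_{B\prec A}|B|,
\]
which depends only on $A$, so that \eqref{eq:of1} becomes a statement about the monotonicity of this expression in $A$.

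The most delicate point, though still routine, is the \emph{strict} monotonicity of the closed form as $A$ climbs through the totally ordered quotient $X_n/{\sim}$: this strictness is what upgrades the equality $|F^{-1}[x]|=|F^{-1}[y]|$ to membership in a common $\sim$-class rather than a mere coincidence of class sizes at different positions. It is visible directly from the closed form, since passing from $A$ to a strictly higher class $A'$ contributes at least $|A|+|A'|>0$ to the difference of preimage cardinalities.
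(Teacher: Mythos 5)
Your proof is correct. The paper itself gives no argument for this proposition --- it is imported from \cite{CouDevMar2} --- so there is nothing internal to compare against, but your route is the natural one and matches what that reference does: the first equivalence by a block-by-block case analysis using the structure supplied by Theorem~\ref{thm:kimura}, and the second via the closed form $|F^{-1}[z]|=|A|+2\sum_{B\prec A}|B|$, which is precisely identity~\eqref{eq:sd54} that the paper later quotes from \cite[Prop.~2.2]{CouDevMar2}. Your count of the contributions of the blocks $A\times A$, $(A\times B)\cup(B\times A)$ with $B\prec A$, and the remaining blocks is right, and the strict increase of $|A|+|A'|+2\sum_{A\prec B\prec A'}|B|$ between consecutive classes is exactly what is needed to get the converse implication in~\eqref{eq:of1} (ruling out $y\prec x$ when $|F^{-1}[x]|\leq|F^{-1}[y]|$). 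One small presentational point: you open by calling $\precsim$ ``the unique weak ordering supplied by Theorem~\ref{thm:kimura},'' but uniqueness is part of what is being proved; it is cleaner to take $\precsim$ to be \emph{any} weak ordering witnessing the theorem, observe that your case analysis forces it to satisfy~\eqref{eq:1of1}, and then note that the right-hand side of~\eqref{eq:1of1} depends only on $F$, so any two such orderings coincide.
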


\begin{remark}
Condition \eqref{eq:of1} was equivalently stated in \cite{CouDevMar2} in terms of $F$-degrees, where the $F$-degree of an element $z\in X_n$ is the natural integer $\deg_F(z)=|F^{-1}[z]|-1$.
\end{remark}

In this paper, the weak ordering $\precsim$ on $X$ defined by \eqref{eq:1of1} from any $F\in\mathcal{F}$ will henceforth be denoted by $\precsim_F$.

The following immediate corollary provides an alternative characterization of the class $\mathcal{F}$ that does not make use of the concept of weak ordering. Here the axiom of choice is required to choose total orderings in a collection of sets.

\begin{corollary}\label{cor:kimura}
Assume the axiom of choice. We have $F\in\mathcal{F}$ if and only if there exists a total ordering $\preceq$ on $X$, a partition of $X$ into nonempty $\preceq$-convex sets $\{C_i:i\in I\}$, and a map $\varepsilon\colon I\to\{1,2\}$ such that
$$
F(x,y) ~=~
\begin{cases}
\pi_{\varepsilon(i)}(x,y), & \text{if $\exists\, i\in I$ such that $x,y\in C_i$},\\
\max_{\preceq}(x,y), & \text{otherwise},
\end{cases}
\qquad \forall x,y\in X.
$$
\end{corollary}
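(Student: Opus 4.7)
The plan is to derive Corollary~\ref{cor:kimura} directly from Theorem~\ref{thm:kimura} by translating between weak orderings on $X$ and pairs consisting of a total ordering together with a $\preceq$-convex partition. The bridge between the two formulations is the observation already implicit in Section~2: a weak ordering $\precsim$ on $X$ is the same datum as a partition $\{C_i:i\in I\}$ of $X$ together with a total ordering on the quotient ${X/{\sim}}$. Conversely, given a total ordering $\preceq$ on $X$ and a partition of $X$ into $\preceq$-convex sets $\{C_i:i\in I\}$, declaring $x\precsim y$ iff $x,y$ belong to the same $C_i$, or iff $x\in C_i$ and $y\in C_j$ with some (equivalently every) element of $C_i$ strictly $\preceq$-below some element of $C_j$, defines a weak ordering whose $\sim$-classes are exactly the $C_i$.

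For necessity, I take $F\in\mathcal{F}$ and invoke Theorem~\ref{thm:kimura} to obtain the unique weak ordering $\precsim_F$ described in Proposition~\ref{prop:woKI}. Let $\{C_i:i\in I\}$ be the partition of $X$ into $\sim$-classes. Using the axiom of choice, I pick a total ordering on each $C_i$; combining these lexicographically with the total ordering induced on the quotient produces a total ordering $\preceq$ on $X$ that extends $\precsim_F$ and for which every $C_i$ is $\preceq$-convex. On each class $C_i$, the restriction $F|_{C_i\times C_i}$ equals $\pi_1$ or $\pi_2$ by Theorem~\ref{thm:kimura}, and I set $\varepsilon(i)\in\{1,2\}$ accordingly. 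For $x\in C_i$ and $y\in C_j$ with $i\neq j$, the fact that $\preceq$ extends $\precsim_F$ together with $x\not\sim y$ forces $\max_{\preceq}(x,y)=\max_{\precsim_F}(x,y)=F(x,y)$, which yields the desired representation.

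For sufficiency, suppose $F$ is described by the data $(\preceq,\{C_i\},\varepsilon)$ as in the statement. I define the weak ordering $\precsim$ on $X$ by applying the inverse of the above correspondence, so that its $\sim$-classes are exactly the $C_i$ and the induced total ordering on ${X/{\sim}}$ is inherited from $\preceq$ via the $\preceq$-convexity of the blocks. Then $\max_{\precsim}$ and $\max_{\preceq}$ agree on pairs drawn from distinct blocks, while $F|_{C_i\times C_i}=\pi_{\varepsilon(i)}|_{C_i\times C_i}$ by definition. Thus $F$ satisfies the formula of Theorem~\ref{thm:kimura} with the weak ordering $\precsim$, so $F\in\mathcal{F}$.

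The proof requires no deep calculation; the main point is the careful bookkeeping needed to verify that the lexicographic combination is a well-defined total ordering, that $\preceq$-convexity transfers correctly under the correspondence, and that the two ``max'' operations agree across distinct blocks. The sole use of the axiom of choice is in selecting, simultaneously, a total ordering on each (possibly infinite) $\sim$-class; in the finite case such a choice is automatic.
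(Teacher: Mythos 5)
Your argument is correct and is exactly the intended one: the paper states this as an ``immediate'' corollary of Theorem~\ref{thm:kimura} and gives no proof, and your translation between weak orderings and pairs (total ordering, $\preceq$-convex partition) --- with the axiom of choice used only to totally order each $\sim$-class before forming the lexicographic refinement --- supplies precisely the omitted details. The key verifications (convexity of the classes under the refined ordering, that two disjoint $\preceq$-convex blocks are fully comparable so that $\max_{\preceq}$ and $\max_{\precsim}$ agree across distinct blocks) are all present and sound.
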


An operation $F\colon X^2\to X$ of the form given in Corollary~\ref{cor:kimura} is called an \emph{ordinal sum} \cite{Lan80} of projections on the totally ordered set $(X,\preceq)$. Such an operation is illustrated in Figure~\ref{fig:osp}. Combining Theorem~\ref{thm:kimura} and Corollary~\ref{cor:kimura}, we can easily see that any $F\in\mathcal{F}$ is an ordinal sum  of projections on $(X,\preceq)$ if and only if $\preceq$ extends $\precsim_F$ (i.e, we have $a\prec b$ whenever $a\prec_F b$).

\setlength{\unitlength}{4.8ex}
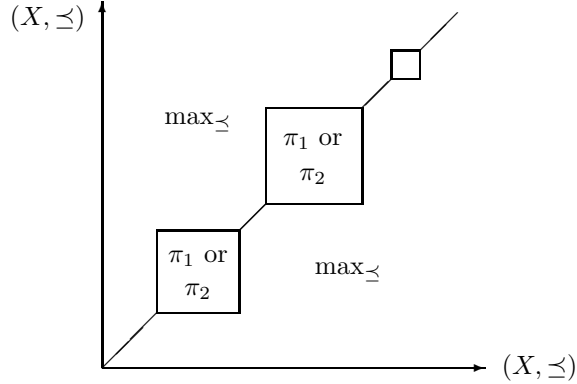
\begin{figure}[htbp]
\begin{center}
\begin{picture}(6.5,6.5)
\put(0,0){\vector(1,0){7}}\put(0,0){\vector(0,1){6.7}}
\put(8,0){\makebox(0,0){$(X,\preceq)$}}\put(-1,6.4){\makebox(0,0){$(X,\preceq)$}}
\put(1,1){\framebox(1.5,1.5){}}
\put(3,3){\framebox(1.75,1.75){}}\put(5.3,5.3){\framebox(0.5,0.5){}}
\put(1.75,2){\makebox(0,0){$\pi_1$ or}}\put(1.75,1.4){\makebox(0,0){$\pi_2$}}
\put(3.85,4.15){\makebox(0,0){$\pi_1$ or}}\put(3.85,3.5){\makebox(0,0){$\pi_2$}}
\put(1.75,4.5){\makebox(0,0){$\max_{\preceq}$}}\put(4.5,1.75){\makebox(0,0){$\max_{\preceq}$}}
\drawline[1](0,0)(1,1)\drawline[1](2.5,2.5)(3,3)\drawline[1](4.75,4.75)(5.3,5.3)\drawline[1](5.8,5.8)(6.5,6.5)
\end{picture}
\caption{An ordinal sum of projections}
\label{fig:osp}
\end{center}
\end{figure}

Let us now assume that $X=X_n$ for some integer $n\geq 1$. Define the \emph{contour plot} of any operation $F\colon X_n^2\to X_n$ by the undirected graph $\mathcal{C}_F=(X_n^2,E)$, where
$$
E ~=~ \{\{(x,y),(u,v)\}:(x,y)\neq (u,v)~\text{and}~F(x,y)=F(u,v)\}.
$$
We observe that, for any $z\in X_n$ such that $F^{-1}[z]\neq\varnothing$, the subgraph of $\mathcal{C}_F$ induced by $F^{-1}[z]$ is a complete connected component of $\mathcal{C}_F$. It is also clear that $\mathcal{C}_F$ has exactly $|F(X^2_n)|$ connected components. In particular, $\mathcal{C}_F$ has $n$ connected components for every $F\in\mathcal{F}_n$.

We can always represent the contour plot of any operation $F\colon X_n^2\to X_n$ by fixing a total ordering on $X_n$. For instance, using the usual total ordering $\leq_6$ on $X_6$, in Figure~\ref{fig:2ab} (left) we represent the contour plot of an operation $F\colon X_6^2\to X_6$. To simplify the representation of the connected components, we omit edges that can be obtained by transitivity. The weak ordering $\precsim$ on $X_6$ obtained from \eqref{eq:of1} is such that $3\sim 4\prec 2\prec 1\sim 5\sim 6$. In Figure~\ref{fig:2ab} (right) we represent the contour plot of $F$ by using a total ordering $\leq$ on $X_6$ that extends $\precsim$. We then obtain an ordinal sum of projections on $\leq$, which finally shows that $F\in\mathcal{F}_6$ and that ${\precsim_F}={\precsim}$.

\setlength{\unitlength}{3.5ex}
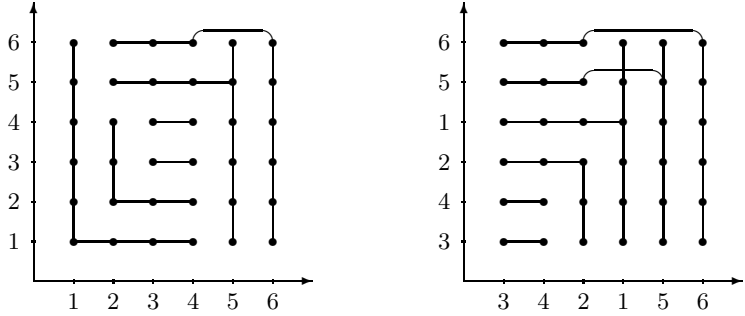
\begin{figure}[htbp]
\begin{center}
\begin{small}
\null\hspace{0.03\textwidth}
\begin{picture}(8,8)
\put(0.5,0.5){\vector(1,0){7}}\put(0.5,0.5){\vector(0,1){7}}
\multiput(1.5,0.45)(1,0){6}{\line(0,1){0.1}}%
\multiput(0.45,1.5)(0,1){6}{\line(1,0){0.1}}%
\put(1.5,0){\makebox(0,0){$1$}}\put(2.5,0){\makebox(0,0){$2$}}\put(3.5,0){\makebox(0,0){$3$}}
\put(4.5,0){\makebox(0,0){$4$}}\put(5.5,0){\makebox(0,0){$5$}}\put(6.5,0){\makebox(0,0){$6$}}
\put(0,1.5){\makebox(0,0){$1$}}\put(0,2.5){\makebox(0,0){$2$}}\put(0,3.5){\makebox(0,0){$3$}}
\put(0,4.5){\makebox(0,0){$4$}}\put(0,5.5){\makebox(0,0){$5$}}\put(0,6.5){\makebox(0,0){$6$}}
\multiput(1.5,1.5)(0,1){6}{\multiput(0,0)(1,0){6}{\circle*{0.2}}}
\drawline[1](1.5,6.5)(1.5,1.5)(4.5,1.5)\drawline[1](2.5,4.5)(2.5,2.5)(4.5,2.5)\drawline[1](3.5,3.5)(4.5,3.5)
\drawline[1](3.5,4.5)(4.5,4.5)\drawline[1](5.5,1.5)(5.5,6.5)\drawline[1](6.5,1.5)(6.5,6.5)
\drawline[1](2.5,5.5)(5.5,5.5)\drawline[1](2.5,6.5)(4.5,6.5)\put(5.5,6.5){\oval(2,0.6)[t]}
\end{picture}
\hspace{0.1\textwidth}
\begin{picture}(8,8)
\put(0.5,0.5){\vector(1,0){7}}\put(0.5,0.5){\vector(0,1){7}}
\multiput(1.5,0.45)(1,0){6}{\line(0,1){0.1}}%
\multiput(0.45,1.5)(0,1){6}{\line(1,0){0.1}}%
\put(1.5,0){\makebox(0,0){$3$}}\put(2.5,0){\makebox(0,0){$4$}}\put(3.5,0){\makebox(0,0){$2$}}
\put(4.5,0){\makebox(0,0){$1$}}\put(5.5,0){\makebox(0,0){$5$}}\put(6.5,0){\makebox(0,0){$6$}}
\put(0,1.5){\makebox(0,0){$3$}}\put(0,2.5){\makebox(0,0){$4$}}\put(0,3.5){\makebox(0,0){$2$}}
\put(0,4.5){\makebox(0,0){$1$}}\put(0,5.5){\makebox(0,0){$5$}}\put(0,6.5){\makebox(0,0){$6$}}
\multiput(1.5,1.5)(0,1){6}{\multiput(0,0)(1,0){6}{\circle*{0.2}}}
\drawline[1](1.5,3.5)(3.5,3.5)(3.5,1.5)\drawline[1](1.5,1.5)(2.5,1.5)\drawline[1](1.5,2.5)(2.5,2.5)
\drawline[1](4.5,1.5)(4.5,6.5)\drawline[1](5.5,1.5)(5.5,6.5)\drawline[1](6.5,1.5)(6.5,6.5)
\drawline[1](1.5,4.5)(4.5,4.5)\drawline[1](1.5,5.5)(3.5,5.5)\drawline[1](1.5,6.5)(3.5,6.5)
\put(4.5,5.5){\oval(2,0.6)[t]}\put(5,6.5){\oval(3,0.6)[t]}
\end{picture}
\end{small}
\caption{An operation $F\in\mathcal{F}_6$ (left) and its ordinal sum representation (right)}
\label{fig:2ab}
\end{center}
\end{figure}

This example clearly illustrates the following simple test to check whether a given operation $F\colon X_n^2\to X_n$ is associative and quasitrivial. First, use condition \eqref{eq:of1} to construct the unique weak ordering $\precsim$ on $X_n$ from the preimage sequence $|F^{-1}|$. Then, extend this weak ordering to a total ordering $\leq$ on $X_n$ and check if $F$ is an ordinal sum of projections on $\leq$. This test can be easily performed in $O(n^2)$ time.

Let us now present a result that will be useful as we continue. Recall first that two undirected graphs $G=(V,E)$ and $G'=(V',E')$ are said to be \emph{isomorphic}, and we write $G\simeq G'$, if there exists a bijection $\phi\colon V'\to V$ such that
$$
\{x,y\}\in E'\quad\Leftrightarrow\quad\{\phi(x),\phi(y)\}\in E,\qquad x,y\in V'.
$$
The bijection $\phi$ is then called an \emph{isomorphism} from $G'$ to $G$. It is called an \emph{automorphism} of $G$ if $G'=G$.

\begin{proposition}\label{prop:FG1C}
For any two operations $F\colon X_n^2\to X_n$ and $G\colon X_n^2\to X_n$, we have $|F^{-1}|=|G^{-1}|$ if and only if $\mathcal{C}_F\simeq\mathcal{C}_G$.
\end{proposition}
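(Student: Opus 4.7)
The plan is to reduce the statement to the elementary fact that a disjoint union of complete graphs is determined up to isomorphism by the multiset of orders of its components. As the excerpt already points out, for every $z\in F(X_n^2)$ the subgraph of $\mathcal{C}_F$ induced by $F^{-1}[z]$ is a complete connected component of $\mathcal{C}_F$, and these subgraphs partition the vertex set $X_n^2$. Therefore $\mathcal{C}_F$ is (isomorphic to) the disjoint union of the complete graphs $K_{|F^{-1}[z]|}$ for $z$ ranging over the image of $F$, i.e.\ over those $z\in X_n$ for which $|F^{-1}[z]|>0$.

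First I would prove the easy direction ($\Rightarrow$): if $|F^{-1}|=|G^{-1}|$, then the multisets $\{|F^{-1}[z]|:z\in X_n\}$ and $\{|G^{-1}[z]|:z\in X_n\}$ agree, and removing the (matching numbers of) zero entries shows that the multisets of component orders of $\mathcal{C}_F$ and $\mathcal{C}_G$ agree. A bijection $X_n^2\to X_n^2$ mapping each complete component of $\mathcal{C}_G$ onto a complete component of $\mathcal{C}_F$ of the same order is then an isomorphism.

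For the converse ($\Leftarrow$), any isomorphism $\phi\colon\mathcal{C}_G\to\mathcal{C}_F$ carries connected components to connected components of the same order, so the multisets of component orders of $\mathcal{C}_F$ and $\mathcal{C}_G$ coincide. These multisets are precisely the nonzero parts of the multisets $\{|F^{-1}[z]|:z\in X_n\}$ and $\{|G^{-1}[z]|:z\in X_n\}$. Since
\[
\sum_{z\in X_n}|F^{-1}[z]| ~=~ n^2 ~=~ \sum_{z\in X_n}|G^{-1}[z]|,
\]
the number of zero entries is the same on both sides, and both sequences have length $n$; hence after sorting in nondecreasing order we obtain $|F^{-1}|=|G^{-1}|$.

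There is no real obstacle here: the proof is essentially bookkeeping once the key structural observation about $\mathcal{C}_F$ (a disjoint union of complete graphs indexed by the preimages) is made explicit. The only mild subtlety is handling the zero entries of the preimage sequence (corresponding to elements of $X_n$ not in the image of $F$), which is resolved by the cardinality argument $\sum_z|F^{-1}[z]|=n^2$.
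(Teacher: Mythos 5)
Your proof is correct and follows essentially the same route as the paper: view $\mathcal{C}_F$ as a disjoint union of complete graphs on the nonempty preimages and match components of equal order. The only difference is that you spell out the bookkeeping for the zero entries of the preimage sequence (elements outside the image), which the paper leaves implicit by declaring that direction trivial; note that the sum $\sum_z|F^{-1}[z]|=n^2$ is not actually needed there, since equality of the nonzero multisets together with both sequences having length $n$ already forces the zero counts to agree.
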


\begin{proof}
(Sufficiency) Trivial.

(Necessity) Recall that the \emph{order} of a graph is simply the number of its vertices. Thus, by definition, $|F^{-1}|$ is the nondecreasing $n$-element sequence of the orders of the connected components of $\mathcal{C}_F$. If $|F^{-1}|=|G^{-1}|$, then it is not difficult to construct a bijection $\phi\colon X^2_n\to X^2_n$ that maps a connected component of $\mathcal{C}_F$ to a connected component of $\mathcal{C}_G$ of the same order. Since all these connected components are complete subgraphs, we obtain that $\mathcal{C}_F\simeq\mathcal{C}_G$.
\end{proof}

\section{Classifications of quasitrivial semigroups}

It is a fact that the class $\mathcal{F}$ is generally very huge. In the finite case, the size of the class $\mathcal{F}_n$ (a sequence recorded in the OEIS as Sloane's A292932; see \cite{Slo}) becomes very large as $n$ grows (see \cite[Theorem 4.1]{CouDevMar2}).\footnote{In fact, we have $|\mathcal{F}_n| \sim \frac{1}{2\lambda+1}{\,}n!{\,}\lambda^{n+2}$ as $n\to\infty$, where $\lambda ~(\approx 1.71)$ is the inverse of the unique positive zero of the real function $x\mapsto x+3-2e^x$.} It is then natural to classify the elements of $\mathcal{F}$ by considering relevant equivalence relations on this class. Before introducing such relations, let us recall some basic definitions.

Recall first that two weak orderings $\lesssim$ and $\lesssim'$ on $X$ are said to be \emph{isomorphic}, and we write ${\lesssim}\simeq{\lesssim'}$, if there exists a bijection $\phi\colon X\to X$ such that
$$
x\lesssim' y\quad\Leftrightarrow\quad\phi(x)\lesssim\phi(y),\qquad x,y\in X.
$$
The bijection $\phi$ is then called an \emph{isomorphism} from $(X,\lesssim')$ to $(X,\lesssim)$. It is called an \emph{automorphism} of $(X,\lesssim)$ if ${\lesssim'}={\lesssim}$.

Let $\mathfrak{S}$ be the group of permutations on $X$. We will often denote this group by $\mathfrak{S}_n$ if $X=X_n$ for some integer $n\geq 1$. For any operation $F\colon X^2\to X$ and any permutation $\sigma\in\mathfrak{S}$, the \emph{$\sigma$-conjugate of $F$} is the operation $F_{\sigma}\colon X^2\to X$ defined by
$$
F_{\sigma}(x,y) ~=~ \sigma(F(\sigma^{-1}(x),\sigma^{-1}(y))),\qquad x,y\in X.
$$
A \emph{conjugate of $F$} is a $\sigma$-conjugate of $F$ for some $\sigma\in\mathfrak{S}$.

Clearly, the map $\psi\colon\mathfrak{S}\times\mathcal{F}\to\mathcal{F}$ defined by $\psi(\sigma,F)=F_{\sigma}$ is a group action of $\mathfrak{S}$ on $\mathcal{F}$. We then can define
\begin{itemize}
\item the \emph{orbit} of $F\in\mathcal{F}$ by $\mathrm{orb}(F)=\{F_{\sigma}:\sigma\in\mathfrak{S}\}$,
\item the \emph{stabilizer subgroup} of $\mathfrak{S}$ for $F\in\mathcal{F}$ by $\mathrm{stab}(F)=\{\sigma\in\mathfrak{S}: F_{\sigma}=F\}$.
\end{itemize}

We can readily see that, for any $\sigma\in\mathfrak{S}$, we have $F\in\mathcal{F}$ if and only if $F_{\sigma}\in\mathcal{F}$. Moreover, using \eqref{eq:1of1} we see that, for any $\sigma\in\mathfrak{S}$ and any $F\in\mathcal{F}$ we have
\begin{equation}\label{eq:isoWO}
x\precsim_F y\quad\Leftrightarrow\quad\sigma(x)\precsim_{F_{\sigma}}\sigma(y){\,},\qquad x,y\in X.
\end{equation}

Now, let $q$ be the identity relation on $\mathcal{F}$. We also introduce relations $p$, $s$, $r$ on $\mathcal{F}$ as follows. For any $F,G\in\mathcal{F}$, we write
\begin{eqnarray*}
F{\,}p{\,}G, &\text{if}& {\precsim_F}={\precsim_G}\\
F{\,}s{\,}G, &\text{if}& {\precsim_F}\simeq{\precsim_G}\\
F{\,}r{\,}G, &\text{if}& G\in\mathrm{orb}(F){\,}.
\end{eqnarray*}

It is clear that each of the relations above is an equivalence relation and hence it partitions $\mathcal{F}$ into equivalence classes. Moreover, we clearly have that $q\subseteq p\subseteq s$. Using \eqref{eq:isoWO}, we also see that $q\subseteq r\subseteq s$. Furthermore, we observe that $p$ and $r$ are not comparable in general. Indeed, if $F=\pi_1$ and $G=\pi_2$ on $X$, then we have $F{\,}p{\,}G$ and $\neg(F{\,}r{\,}G)$. Similarly, if $F=\max_{\preceq}$ and $G=\min_{\preceq}$ for some total ordering $\preceq$ on $X$, then we have $F{\,}r{\,}G$ and $\neg(F{\,}p{\,}G)$.

The following proposition provides further properties of the relations introduced above. Let us first investigate the conjunction of relations $p$ and $r$.

We observe that, given an operation $F\in\mathcal{F}$, any permutation $\sigma\in\mathfrak{S}$ for which $\sigma(x)\sim_F x$ for all $x\in X$ is an automorphism of $(X,\precsim_F)$. We say that such an automorphism is \emph{trivial}. It is easy to prove by induction that all automorphisms of $(X,\precsim_F)$ are trivial whenever $X$ is finite.

\begin{lemma}\label{lemma:cy64}
Let $F\in\mathcal{F}$ and $\sigma\in\mathfrak{S}$. Consider the following four assertions.
\begin{itemize}
\item[(i)] $F{\,}p{\,}F_{\sigma}$.
\item[(ii)] $F=F_{\sigma}$.
\item[(iii)] $\sigma(x)\sim_F x$ for every $x\in X$.
\item[(iv)] $\sigma$ is an automorphism of $(X,\precsim_F)$.
\end{itemize}
Then we have (iii) $\Rightarrow$ (ii) $\Rightarrow$ (i) $\Leftrightarrow$ (iv). The implication (iv) $\Rightarrow$ (iii) holds if and only if all automorphisms of $(X,\precsim_F)$ are trivial. The latter condition holds for instance if $X$ is finite.
\end{lemma}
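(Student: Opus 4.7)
The plan is to dispatch the easy implications first, isolate the substantive one, and then handle the equivalence involving trivial automorphisms as a tautology.

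First I would observe that (ii) $\Rightarrow$ (i) is immediate from the definition of $p$: if two operations are equal, then they induce the same weak ordering via \eqref{eq:1of1}. Next, for (i) $\Leftrightarrow$ (iv) I would invoke \eqref{eq:isoWO}: the relation $\precsim_{F_\sigma}$ is exactly the pushforward of $\precsim_F$ along $\sigma$, so $\precsim_F = \precsim_{F_\sigma}$ holds precisely when $x \precsim_F y \Leftrightarrow \sigma(x)\precsim_F \sigma(y)$, which is the definition of $\sigma$ being an automorphism of $(X,\precsim_F)$.

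The main work is in (iii) $\Rightarrow$ (ii), and here I would use the structure theorem (Theorem~\ref{thm:kimura}) to compare $F$ and $F_\sigma$ blockwise. Assuming $\sigma(x) \sim_F x$ for all $x$, the permutation $\sigma$ stabilizes every equivalence class $A$ of $\sim_F$ setwise. For any $x, y \in X$, let $A, B$ be the classes containing $x, y$. If $A = B$, then $\sigma^{-1}(x), \sigma^{-1}(y) \in A$, and since $F|_{A\times A}$ equals $\pi_1$ or $\pi_2$, a direct computation gives $F_\sigma(x,y) = \sigma(\sigma^{-1}(x)) = x$ or $\sigma(\sigma^{-1}(y)) = y$, matching $F(x,y)$. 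If $A \neq B$, say $A \prec_F B$, then $\sigma^{-1}(x) \in A$ and $\sigma^{-1}(y) \in B$ still lie in different classes with the same order, so $F(\sigma^{-1}(x),\sigma^{-1}(y)) = \sigma^{-1}(y)$ and hence $F_\sigma(x,y) = y = F(x,y)$. Thus $F = F_\sigma$. The only thing to be a little careful about is ensuring the choice of $\pi_1$ versus $\pi_2$ on each diagonal block is preserved, but this falls out of the computation since $\sigma$ does not move the class.

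For the final clause, the equivalence of ``(iv) $\Rightarrow$ (iii)'' with ``every automorphism of $(X,\precsim_F)$ is trivial'' is a direct unpacking: the implication universally quantified over $\sigma$ says exactly that every $\sigma$ which is an automorphism satisfies $\sigma(x)\sim_F x$ everywhere, i.e.\ is trivial. Finally, for the finite case, I would argue that any automorphism $\sigma$ of $(X,\precsim_F)$ induces an order-preserving bijection of the quotient $X/{\sim_F}$, which is a finite totally ordered set; such a bijection must be the identity, so $\sigma$ preserves each class setwise, giving $\sigma(x)\sim_F x$ for all $x$. The only obstacle worth flagging is keeping the case analysis in (iii)~$\Rightarrow$~(ii) honest about the projection/ordering choices, but once the block structure from Theorem~\ref{thm:kimura} is in hand, everything is routine.
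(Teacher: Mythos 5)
Your proposal is correct and follows essentially the same route as the paper: the easy implications via \eqref{eq:1of1} and \eqref{eq:isoWO}, the substantive implication (iii) $\Rightarrow$ (ii) via the block structure of Theorem~\ref{thm:kimura} (your $A=B$ versus $A\neq B$ split is the paper's case split on $x\sim_F y$ versus $x\prec_F y$ or $y\prec_F x$), and the final clause read as a universally quantified tautology. Your treatment of the finite case, via the induced order-preserving bijection of the finite quotient chain, is a clean way to make explicit what the paper only asserts.
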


\begin{proof}
(i) $\Leftrightarrow$ (iv). Straightforward (simply use \eqref{eq:isoWO}).

(iii) $\Rightarrow$ (iv). Trivial.

(iii) $\Rightarrow$ (ii). Let $x,y\in X$. Suppose first that $x\sim_F y$. By \eqref{eq:isoWO} and conditions (iii) and (iv), we have that $x\sim_F\sigma^{-1}(x)\sim_F\sigma^{-1}(y)\sim_F y$. Hence, by Theorem~\ref{thm:kimura} there exists $i\in\{1,2\}$ such that
\begin{eqnarray*}
F(\sigma^{-1}(x),\sigma^{-1}(y)) &=& \pi_i(\sigma^{-1}(x),\sigma^{-1}(y)) ~=~ \sigma^{-1}(\pi_i(x,y))\\
&=& \sigma^{-1}(F(x,y)),
\end{eqnarray*}
that is, $F_{\sigma}(x,y)=F(x,y)$. We proceed similarly if $x\prec_F y$ or $y\prec_F x$.

(ii) $\Rightarrow$ (i) Trivial.

The last part of the lemma is trivial.
\end{proof}

\begin{proposition}\label{prop:cy64}
We have $p\vee r=p\circ r=s$. If $X$ is finite, we also have $p\wedge r=q$.
\end{proposition}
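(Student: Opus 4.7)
The plan is to handle the two claims separately, using the general inclusions $p,r\subseteq s$ as the starting point. These immediately give $p\vee r\subseteq s$ (since $s$ is an equivalence relation containing $p\cup r$) and $p\circ r\subseteq p\vee r$ (since $p\vee r$ is transitive and contains both $p$ and $r$). The substantive work is thus to establish $s\subseteq p\circ r$, which by the chain $s\subseteq p\circ r\subseteq p\vee r\subseteq s$ closes both equalities at once.

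So suppose $F\,s\,G$ and pick a bijection $\sigma\in\mathfrak{S}$ witnessing $\precsim_F\simeq\precsim_G$, that is, $x\precsim_G y\Leftrightarrow\sigma(x)\precsim_F\sigma(y)$ for all $x,y\in X$. My candidate intermediate operation is $H=G_\sigma$. Applying \eqref{eq:isoWO} with $\tau=\sigma$ and relabeling variables gives $u\precsim_H v\Leftrightarrow\sigma^{-1}(u)\precsim_G\sigma^{-1}(v)$; combined with the isomorphism property applied to $x=\sigma^{-1}(u)$, $y=\sigma^{-1}(v)$, this becomes $u\precsim_H v\Leftrightarrow u\precsim_F v$. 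Hence $\precsim_H=\precsim_F$, i.e.\ $F\,p\,H$, and by construction $H\in\mathrm{orb}(G)$, so $H\,r\,G$. This shows $F\,(p\circ r)\,G$, as required.

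For the second claim, assume $X$ is finite and $F\,(p\wedge r)\,G$. Since $F\,r\,G$ we can write $G=F_\sigma$ for some $\sigma\in\mathfrak{S}$, and since $F\,p\,F_\sigma$, Lemma~\ref{lemma:cy64} ((i)$\Leftrightarrow$(iv)) tells us that $\sigma$ is an automorphism of $(X,\precsim_F)$. Because $X$ is finite, the last part of Lemma~\ref{lemma:cy64} forces this automorphism to be trivial, i.e.\ $\sigma(x)\sim_F x$ for every $x\in X$. The implication (iii)$\Rightarrow$(ii) of that same lemma then yields $F=F_\sigma=G$, so $F\,q\,G$. The reverse inclusion $q\subseteq p\wedge r$ is immediate from $q\subseteq p$ and $q\subseteq r$.

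I do not expect a genuine obstacle: both parts reduce to bookkeeping on top of the transport formula \eqref{eq:isoWO} and the already-proven Lemma~\ref{lemma:cy64}. The only point requiring mild care is getting the direction of $\sigma$ right when constructing $H$, so that the isomorphism $\precsim_F\simeq\precsim_G$ aligns with the direction in which conjugation by $\sigma$ transports the weak ordering.
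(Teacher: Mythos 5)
Your proof is correct and follows essentially the same route as the paper: the same intermediate operation $H=G_{\sigma}$ (transported via \eqref{eq:isoWO}) for $s\subseteq p\circ r$, and the same appeal to Lemma~\ref{lemma:cy64} for $p\wedge r=q$ in the finite case. Your write-up merely spells out more explicitly the chain of implications within Lemma~\ref{lemma:cy64} and the inclusion $p\vee r\subseteq s$, which the paper leaves implicit.
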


\begin{proof}
Let us prove the first two identities. Since $p\circ r\subseteq p\vee r$, it is enough to show that $s\subseteq p\circ r$. Let $F,G\in\mathcal{F}$ such that $F{\,}s{\,}G$. That is, there exists $\sigma\in\mathfrak{S}$ such that
$$
x\precsim_G y\quad\Leftrightarrow\quad\sigma(x)\precsim_F\sigma(y),\qquad x,y\in X.
$$
Using \eqref{eq:isoWO}, we then see that
$$
x\precsim_F y\quad\Leftrightarrow\quad\sigma^{-1}(x)\precsim_G\sigma^{-1}(y)\quad\Leftrightarrow\quad x\precsim_{G_{\sigma}} y,\qquad x,y\in X,
$$
which means that ${\precsim_F}={\precsim_{G_{\sigma}}}$. Therefore we have $F{\,}p{\,}G_{\sigma}{\,}r{\,}G$, from which we derive that $s\subseteq p\circ r$.

To prove the last identity, we only need to show that $p\wedge r\subseteq q$. Let $F,G\in\mathcal{F}$ such that $F{\,}p{\,}G$ and $F{\,}r{\,}G$. By Lemma~\ref{lemma:cy64}, we have $F=G$, that is, $F{\,}q{\,}G$.
\end{proof}

\begin{remark}
We can easily construct operations $F\in\mathcal{F}$ for which $(X,\precsim_F)$ has nontrivial automorphisms. Consider for instance the operation $F=\max_{\leq}$ on $X=\Z$, where $\leq$ is the usual ordering on $\Z$, and take $\sigma(x)=x+1$. Then, we have $F\in\mathcal{F}$ and $\sigma\in\mathfrak{S}$. Also, conditions (i), (ii), and (iv) of Lemma~\ref{lemma:cy64} hold but condition (iii) fails to hold. Now, define the operation $F\colon\Z^2\to\Z$ by
$$
F(x,y) ~=~
\begin{cases}
x, & \text{if $(x,y)\in\{0,1\}^2$},\\
y, & \text{if $(x,y)\in\bigcup_{m\in\Z\setminus\{0\}}\{2m,2m+1\}^2$},\\
\max_{\leq}(x,y), & \text{otherwise},
\end{cases}
$$
where $\leq$ is the usual ordering on $\Z$. Take also $\sigma(x)=x-2$. Then, again we have $F\in\mathcal{F}$ and $\sigma\in\mathfrak{S}$. Also, conditions (i) and (iv) of Lemma~\ref{lemma:cy64} hold but condition (iii) fails to hold. Moreover, we have $F(0,1)=0\neq 1=F_{\sigma}(0,1)$, which shows that condition (ii) fails to hold, and hence that $p\wedge r\neq q$.
\end{remark}

\begin{proposition}\label{prop:s7adf5}
For any $\sigma\in\mathfrak{S}$, the map $\tilde{\sigma}\colon\mathcal{F}/p\to \mathcal{F}/p$ defined by $\tilde{\sigma}(F/p)=F_{\sigma}/p$ is a (well-defined) permutation.
\end{proposition}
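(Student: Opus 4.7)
The plan is to verify two separate claims: that $\tilde\sigma$ is well-defined on $p$-classes, and that it is a bijection on $\mathcal{F}/p$. Both will follow from (i) the transport formula \eqref{eq:isoWO} that relates $\precsim_F$ to $\precsim_{F_\sigma}$, and (ii) the fact that the map $\psi$ already introduced in the excerpt is a group action, which immediately gives the identity $(F_\sigma)_\tau = F_{\tau\sigma}$ for all $\sigma,\tau\in\mathfrak{S}$ and all $F\in\mathcal{F}$.

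For well-definedness, let $F,G\in\mathcal{F}$ with $F\,p\,G$, i.e.\ $\precsim_F{=}\precsim_G$. I would show that $\precsim_{F_\sigma}{=}\precsim_{G_\sigma}$. Pick any $u,v\in X$, and set $x=\sigma^{-1}(u)$, $y=\sigma^{-1}(v)$. Applying \eqref{eq:isoWO} twice yields
\[
u\precsim_{F_\sigma} v\ \Leftrightarrow\ x\precsim_F y\ \Leftrightarrow\ x\precsim_G y\ \Leftrightarrow\ u\precsim_{G_\sigma} v,
\]
so $F_\sigma\,p\,G_\sigma$. This shows that the prescription $F/p\mapsto F_\sigma/p$ does not depend on the chosen representative.

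For the bijectivity part, I would exhibit $\widetilde{\sigma^{-1}}$ as a two-sided inverse of $\tilde\sigma$. Since $\psi$ is a group action, one has $(F_\sigma)_{\sigma^{-1}}=F_{\sigma^{-1}\sigma}=F_{\mathrm{id}}=F$, and likewise $(F_{\sigma^{-1}})_\sigma=F$. Therefore
\[
\bigl(\widetilde{\sigma^{-1}}\circ\tilde\sigma\bigr)(F/p)=(F_\sigma)_{\sigma^{-1}}/p=F/p,
\]
and symmetrically $\tilde\sigma\circ\widetilde{\sigma^{-1}}$ is the identity on $\mathcal{F}/p$. Hence $\tilde\sigma$ is a permutation of $\mathcal{F}/p$, with inverse $\widetilde{\sigma^{-1}}$.

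I do not expect a genuine obstacle here: the statement is essentially the observation that the $\mathfrak{S}$-action $\psi$ on $\mathcal{F}$ descends to an $\mathfrak{S}$-action on the quotient $\mathcal{F}/p$, and both required facts (stability of $p$-classes under conjugation, and inversion of conjugation) are immediate consequences of \eqref{eq:isoWO} and of $\psi$ being a group action. The only point worth stating cleanly is the computation $(F_\sigma)_\tau=F_{\tau\sigma}$, which follows directly from unfolding the defining formula for conjugation.
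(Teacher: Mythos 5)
Your proof is correct and follows essentially the same route as the paper: well-definedness via \eqref{eq:isoWO}, and bijectivity via the conjugate $F_{\sigma^{-1}}$ (the paper phrases this as injectivity from the ``if and only if'' in \eqref{eq:isoWO} plus surjectivity, whereas you package both into the two-sided inverse $\widetilde{\sigma^{-1}}$, which is a cosmetic difference). No gaps.
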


\begin{proof}
Let $\sigma\in\mathfrak{S}$. For any $F,G\in\mathcal{F}$, by \eqref{eq:isoWO} we have $F{\,}p{\,}G$ if and only if $F_{\sigma}{\,}p{\,}G_{\sigma}$, which shows that $\tilde{\sigma}$ is well defined and injective. Now, for any $F\in\mathcal{F}$, we have $\tilde{\sigma}(F_{\sigma^{-1}}/p)=F/p$, which shows that $\tilde{\sigma}$ is also surjective.
\end{proof}

In the rest of this section we restrict ourselves to the finite case when $X=X_n$ for some integer $n\geq 1$. This assumption will enable us to enumerate the equivalence classes for each of the equivalence relations introduced above. For any integer $n\geq 1$, define $p(n)=|\mathcal{F}_n/p|$, $q(n)=|\mathcal{F}_n/q|$, $r(n)=|\mathcal{F}_n/r|$, and $s(n)=|\mathcal{F}_n/s|$. The first few values of these sequences are given in Table~\ref{tab:pqrs}.

\begin{table}[htbp]
$$
\begin{array}{|c|rrrr|}
\hline n & p(n) & q(n) & r(n) & s(n) \\
\hline 1 & 1 & 1 & 1 & 1 \\
2 & 3 & 4 & 3 & 2 \\
3 & 13 & 20 & 7 & 4 \\
4 & 75 & 138 & 17 & 8 \\
5 & 541 & 1{\,}182 & 41 & 16 \\
6 & 4{\,}683 & 12{\,}166 & 99 & 32 \\
\hline
\mathrm{OEIS}^{\mathstrut} & \mathrm{A000670} & \mathrm{A292932} & \mathrm{A001333} & \mathrm{A011782} \\
\hline
\end{array}
$$
\caption{First few values of $p(n)$, $q(n)$, $r(n)$, and $s(n)$}
\label{tab:pqrs}
\end{table}

By definition, $p(n)$ is the number of weak orderings on $X_n$, or equivalently, the number of totally ordered partitions of $X_n$ (Sloane's $\mathrm{A000670}$). Also, we clearly have $q(n)=|\mathcal{F}_n|$ and this number was recently computed in \cite{CouDevMar2} (Sloane's $\mathrm{A292932}$). Let us now investigate the numbers $r(n)$ and $s(n)$.

For any $F\in\mathcal{F}_n$, we set $k=|X_n/{\sim}_F|$ and let $C_1,\ldots,C_k$ denote the elements of ${X_n/{\sim_F}}$ ordered by the relation induced by $\precsim_F$, that is, $C_1\prec_F\cdots\prec_F C_k$ (where $C_i\prec_F C_j$ means that we have $x\prec_F y$ for all $x\in C_i$ and all $y\in C_j$). Also, we set $n_i=|C_i|$ for $i=1,\ldots,k$. We then define the \emph{signature} of $F$ as the $k$-tuple $(n_1,\ldots,n_k)$ and we denote it by $\mathrm{sgn}(F)$.

It is clear that the number of possible signatures in $\mathcal{F}_n$ is precisely the number of totally ordered partitions of a set of $n$ unlabeled items (Sloane's $\mathrm{A011782}$), that is,
$$
\sum_{k=1}^n ~\sum_{\textstyle{n_1,\ldots,n_k\geq 1\atop n_1+\cdots +n_k=n}}1~=~2^{n-1}{\,}.
$$
It follows that this number is also the number of weak orderings on $X_n$ that are defined up to an isomorphism. Thus, we have $s(n)=2^{n-1}$ for all $n\geq 1$.

We actually have the following more general result.

\begin{proposition}\label{prop:444}
For any $F,G\in\mathcal{F}_n$, the following assertions are equivalent.
\begin{enumerate}
\item[(i)] $F{\,}s{\,}G$.
\item[(ii)] $\mathcal{C}_F\simeq\mathcal{C}_G$.
\item[(iii)] $|F^{-1}|=|G^{-1}|$.
\item[(iv)] $\mathrm{sgn}(F)=\mathrm{sgn}(G)$.
\end{enumerate}
\end{proposition}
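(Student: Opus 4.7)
My plan is to prove the chain of equivalences (iv) $\Rightarrow$ (iii) $\Leftrightarrow$ (ii) and (iv) $\Leftrightarrow$ (i), which together yield the full result. The equivalence (ii) $\Leftrightarrow$ (iii) is immediate from Proposition~\ref{prop:FG1C}, so nothing new is required there.

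For (iii) $\Leftrightarrow$ (iv), the key step is to show that the signature $\mathrm{sgn}(F)=(n_1,\ldots,n_k)$ determines, and is determined by, the preimage sequence $|F^{-1}|$. Using Theorem~\ref{thm:kimura} to describe the behavior of $F$ on pairs $(C_i,C_j)$, I would compute $|F^{-1}[z]|$ for $z\in C_i$ by splitting pairs $(x,y)$ according to which class contains $x$ and $y$: pairs in $C_j\times C_i$ or $C_i\times C_j$ with $j<i$ each contribute $\sum_{j<i}n_j$ to $|F^{-1}[z]|$, while pairs in $C_i\times C_i$ contribute exactly $n_i$ (because $F$ restricted there is a projection). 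This yields
$$
|F^{-1}[z]| ~=~ n_i+2\sum_{j<i}n_j, \qquad z\in C_i.
$$
Since $|F^{-1}[z]|$ is strictly increasing in $i$ (its increment from $i$ to $i+1$ is $n_i+n_{i+1}>0$), the nondecreasing sequence $|F^{-1}|$ is obtained from $\mathrm{sgn}(F)$ by repeating the $i$th value $n_i$ times. This map from signatures to preimage sequences is therefore bijective, which gives (iii) $\Leftrightarrow$ (iv).

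For (i) $\Leftrightarrow$ (iv), the point is that any isomorphism $\phi$ from $(X_n,\precsim_G)$ to $(X_n,\precsim_F)$ must send each $\sim_G$-equivalence class onto a $\sim_F$-equivalence class of the same size, and must preserve the induced total ordering on quotients. This forces $\mathrm{sgn}(F)=\mathrm{sgn}(G)$. Conversely, if $\mathrm{sgn}(F)=\mathrm{sgn}(G)$, then writing $C_1\prec_F\cdots\prec_F C_k$ and $C'_1\prec_G\cdots\prec_G C'_k$, I pick any bijection $\phi\colon X_n\to X_n$ that maps $C'_i$ onto $C_i$ for each $i$; by construction $\phi$ is an isomorphism from $(X_n,\precsim_G)$ to $(X_n,\precsim_F)$.

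The only slightly delicate step is the computation in the (iii) $\Leftrightarrow$ (iv) direction, which requires careful bookkeeping of the contributions to $F^{-1}[z]$ coming from the various products $C_i\times C_j$; but this is just an application of Theorem~\ref{thm:kimura}. Everything else is structural.
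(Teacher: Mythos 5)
Your proof is correct, and it reorganizes the equivalences differently from the paper. The paper proves (i) $\Leftrightarrow$ (iii) in one line by invoking condition \eqref{eq:of1} (which says $x\precsim_F y$ iff $|F^{-1}[x]|\leq|F^{-1}[y]|$), gets (ii) $\Leftrightarrow$ (iii) from Proposition~\ref{prop:FG1C} exactly as you do, and dismisses (i) $\Leftrightarrow$ (iv) as straightforward. You instead make (iv) the hub: you derive the formula $|F^{-1}[z]|=n_i+2\sum_{j<i}n_j$ for $z\in C_i$ directly from Theorem~\ref{thm:kimura} by counting contributions from each block $C_j\times C_{j'}$, observe that these values are strictly increasing in $i$, and conclude that signature and preimage sequence determine each other; then you handle (i) $\Leftrightarrow$ (iv) by the structural argument that an isomorphism of weak orderings matches up equivalence classes of equal sizes in order. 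Your counting formula is precisely the paper's equation~\eqref{eq:sd541}, which the paper only states \emph{after} the proposition (deriving it from \eqref{eq:sd54}, itself cited from an earlier reference); so your route has the merit of being self-contained and of making the (iii) $\Leftrightarrow$ (iv) correspondence explicit, at the cost of redoing a computation the paper outsources and of not exploiting the shortcut \eqref{eq:of1} provides. Both arguments are sound.
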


\begin{proof}
(i) $\Leftrightarrow$ (iii). Clearly, $|F^{-1}|=|G^{-1}|$ holds if and only if there exists $\sigma\in\mathfrak{S}_n$ such that $|F^{-1}[x]|=|G^{-1}[\sigma(x)]|$ for every $x\in X_n$. The claimed equivalence then immediately follows from condition \eqref{eq:of1}.

(ii) $\Leftrightarrow$ (iii). This result is a special case of Proposition~\ref{prop:FG1C}.

(i) $\Leftrightarrow$ (iv). Straightforward.
\end{proof}

The following proposition provides explicit expressions (bounded above by $n!$) for $|\mathrm{stab}(F)|$ and $|\mathrm{orb}(F)|$ for any $F\in\mathcal{F}_n$. In particular, it shows that $|\mathrm{orb}(F)|$ is precisely the number of ways to partition $X_n$ into $k$ subsets of sizes $n_1,\ldots,n_k$.\footnote{Recall that $k=|X_n/{\sim}_F|$ and that $n_i=|C_i|$ for $i=1,\ldots,k$.}

\begin{proposition}\label{prop:staborb}
For any $F\in\mathcal{F}_n$, we have
$$
|\mathrm{stab}(F)| ~=~ \prod_{i=1}^k n_i!\quad\text{and}\quad |\mathrm{orb}(F)| ~=~ {n\choose n_1,\ldots,n_k}.
$$
\end{proposition}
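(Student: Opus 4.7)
The plan is to first determine $\mathrm{stab}(F)$ by applying Lemma~\ref{lemma:cy64} in the finite setting, and then derive $|\mathrm{orb}(F)|$ via the orbit--stabilizer theorem.

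First I would recall that, since $X_n$ is finite, every automorphism of $(X_n,\precsim_F)$ is trivial, so the implication (iv) $\Rightarrow$ (iii) holds in Lemma~\ref{lemma:cy64}. Combining this with the implications (iii) $\Rightarrow$ (ii) $\Rightarrow$ (i) $\Leftrightarrow$ (iv) already established in that lemma, I obtain that, for any $\sigma\in\mathfrak{S}_n$, the equality $F_\sigma=F$ holds if and only if $\sigma(x)\sim_F x$ for every $x\in X_n$. In other words,
$$
\mathrm{stab}(F) ~=~ \{\sigma\in\mathfrak{S}_n : \sigma(C_i)=C_i \text{ for every } i=1,\ldots,k\}.
$$

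Next I would observe that this set is exactly the subgroup $\mathfrak{S}_{C_1}\times\cdots\times\mathfrak{S}_{C_k}$ of permutations stabilizing each block $C_i$ setwise (and hence permuting its elements freely). Its cardinality is therefore $\prod_{i=1}^k n_i!$, which establishes the first identity. For the second identity I would invoke the orbit--stabilizer theorem, which yields
$$
|\mathrm{orb}(F)| ~=~ \frac{|\mathfrak{S}_n|}{|\mathrm{stab}(F)|} ~=~ \frac{n!}{\prod_{i=1}^k n_i!} ~=~ \binom{n}{n_1,\ldots,n_k}.
$$

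There is no real obstacle here: the only delicate point is to make sure that Lemma~\ref{lemma:cy64} is used in its full strength (namely the equivalence of (ii) and (iii) in the finite case), so that $\mathrm{stab}(F)$ is precisely the group of permutations fixing each $\sim_F$-class setwise, rather than merely a subgroup of the automorphism group of $(X_n,\precsim_F)$.
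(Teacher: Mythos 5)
Your proof is correct and follows essentially the same route as the paper: the paper likewise derives $|\mathrm{stab}(F)|=\prod_{i=1}^k n_i!$ directly from Lemma~\ref{lemma:cy64} and then applies the orbit--stabilizer theorem. You merely spell out the chain of implications (closing the cycle (iii) $\Rightarrow$ (ii) $\Rightarrow$ (i) $\Leftrightarrow$ (iv) $\Rightarrow$ (iii) in the finite case) that the paper leaves implicit, which is a faithful elaboration rather than a different argument.
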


\begin{proof}
The formula for $|\mathrm{stab}(F)|$ immediately follows from Lemma~\ref{lemma:cy64}. By the classical orbit-stabilizer theorem, we have $|\mathrm{orb}(F)|\times|\mathrm{stab}(F)|=|\mathfrak{S}_n|$ for every $F\in\mathcal{F}_n$. This immediately proves the formula for $|\mathrm{orb}(F)|$.
\end{proof}

Recall that $r(n)$ is the number of orbits in $\mathcal{F}_n$ under the action of $\mathfrak{S}_n$. Burnside's lemma then immediately provides the formula
$$
r(n) ~=~ \frac{1}{n!}{\,}\sum_{\sigma\in\mathfrak{S}_n}|\mathcal{F}_n^{\sigma}|{\,},\qquad n\geq 1,
$$
where $\mathcal{F}_n^{\sigma}=\{F\in\mathcal{F}_n: F_{\sigma}=F\}$.

The following proposition provides much simpler explicit expressions for $r(n)$ and shows that the corresponding sequence is known as Sloane's $\mathrm{A001333}$, where we have set $r(0)=1$.

\begin{lemma}\label{lemma:leqn}
For every $F\in\mathcal{F}_n$, there exists a unique $\sigma\in\mathfrak{S}_n$ such that $F_{\sigma}$ is an ordinal sum of projections on $\leq_n$.
\end{lemma}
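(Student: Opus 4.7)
The plan is to read off $\sigma$ directly from the canonical weak ordering $\precsim_F$ furnished by Proposition~\ref{prop:woKI}, and to verify the conclusion by combining \eqref{eq:isoWO} with the characterization of ordinal sums on a totally ordered set given in the paragraph following Corollary~\ref{cor:kimura}.

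Concretely, I would first enumerate the $\sim_F$-classes as $C_1\prec_F\cdots\prec_F C_k$ with $n_i=|C_i|$, and introduce the canonical $\leq_n$-consecutive blocks
\[
D_i ~=~ \{n_1+\cdots+n_{i-1}+1,\,\ldots,\,n_1+\cdots+n_i\},\qquad i=1,\ldots,k.
\]
Since $|C_i|=|D_i|$, there is a unique $\leq_n$-order-preserving bijection $\sigma_i\colon C_i\to D_i$; concatenating the $\sigma_i$ yields the candidate permutation $\sigma\in\mathfrak{S}_n$.

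For existence, I would apply \eqref{eq:isoWO} to observe that the $\sim_{F_\sigma}$-classes are exactly the $D_i$ and are arranged as $D_1\prec_{F_\sigma}\cdots\prec_{F_\sigma}D_k$; hence $\leq_n$ extends $\precsim_{F_\sigma}$, and the remark after Corollary~\ref{cor:kimura} translates this directly into the statement that $F_\sigma$ is an ordinal sum of projections on $\leq_n$.

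For uniqueness, which is the delicate part, I would start by noting that any $\sigma'\in\mathfrak{S}_n$ for which $F_{\sigma'}$ is an ordinal sum on $\leq_n$ must have $\leq_n$ extending $\precsim_{F_{\sigma'}}$, and then use \eqref{eq:isoWO} to force $\sigma'(C_i)=D_i$ for every $i$. The main obstacle that I anticipate lies inside each block: since Theorem~\ref{thm:kimura} only says that $F|_{C_i\times C_i}$ is a projection, and both projections are invariant under an arbitrary internal relabelling of a block, the constraint coming from $F$ alone does not pin down $\sigma'|_{C_i}$; one must additionally invoke the canonical $\leq_n$-order-preserving convention on each $C_i$ to obtain $\sigma'=\sigma$.
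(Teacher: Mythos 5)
Your existence argument is correct and is essentially the paper's: the paper takes a total ordering $\leq$ extending $\precsim_F$ on which $F$ is an ordinal sum of projections (Corollary~\ref{cor:kimura}) and conjugates by the order isomorphism from $(X_n,\leq)$ onto $(X_n,\leq_n)$, which is exactly your block-by-block construction of $\sigma$ written less explicitly.

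The uniqueness part is where the trouble is, and you have in fact put your finger on a genuine defect of the statement rather than produced a proof of it. As you observe, relabelling within a block $C_i$ leaves the conjugate unchanged (indeed $|\mathrm{stab}(F_\sigma)|=\prod_i n_i!$ by Proposition~\ref{prop:staborb}), so whenever some $n_i\geq 2$ there are several permutations $\sigma'$ with $F_{\sigma'}=F_\sigma$ an ordinal sum of projections on $\leq_n$; for $F=\pi_1$ \emph{every} $\sigma'$ works. So the permutation itself is genuinely not unique, and ``invoking the canonical $\leq_n$-order-preserving convention on each $C_i$'' cannot rescue the claim: an arbitrary $\sigma'$ satisfying the hypothesis is under no obligation to respect your convention. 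What is true, what the paper's own proof actually establishes, and what is used downstream in Proposition~\ref{prop:d6fssf}, is that the \emph{conjugate} $F_\sigma$ is unique, i.e., each orbit contains exactly one ordinal sum of projections on $\leq_n$ (the paper's closing phrase ``or equivalently, $\sigma=\mu$'' overstates this). Your argument is one step away from that correct conclusion: having forced $\sigma'(C_i)=D_i$ for every $i$, you get ${\precsim_{F_{\sigma'}}}={\precsim_{F_{\sigma}}}$, i.e., $F_{\sigma'}\,p\,F_{\sigma}$; since also $F_{\sigma'}\,r\,F_{\sigma}$, the finite-case identity $p\wedge r=q$ (Lemma~\ref{lemma:cy64} and Proposition~\ref{prop:cy64}) yields $F_{\sigma'}=F_{\sigma}$. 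That equality of conjugates, not $\sigma'=\sigma$, is the uniqueness you should be proving.
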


\begin{proof}
(Uniqueness) Let $\sigma,\mu\in\mathfrak{S}_n$ be such that $F_{\sigma}$ and $F_{\mu}$ are ordinal sums of projections on $\leq_n$. Since $F_{\sigma}$ and $F_{\mu}$ have the same signature, it follows that $F_{\sigma}{\,}p{\,}F_{\mu}$. By Lemma~\ref{lemma:cy64}, we then have $F_{\sigma}=F_{\mu}$, or equivalently, $\sigma=\mu$.

(Existence) By Corollary \ref{cor:kimura}, $F$ is an ordinal sum of projections on some total ordering $\leq$ that extends $\precsim_F$. Take $\sigma\in\mathfrak{S}_n$ such that
$$
\sigma(x)\leq_n\sigma(y)\quad\Leftrightarrow\quad x\leq y,\qquad x,y\in X_n.
$$
We then immediately see that $\leq_n$ extends $\precsim_{F_{\sigma}}$. Hence $F_{\sigma}$ is an ordinal sum of projections on $\leq_n$.
\end{proof}

\begin{proposition}\label{prop:d6fssf}
The sequence $(r(n))_{n\geq 0}$ satisfies the linear recurrence equation
$$
r(n+2) ~=~ 2r(n+1)+r(n),
$$
with $r(0)=1$ and $r(1)=1$. Its (ordinary) generating function is $R(z)=(1-z)/(1-2z-z^2)$. Moreover we have
$$
r(n) ~=~ \textstyle{\frac{1}{2}(1+\sqrt{2})^n+\frac{1}{2}(1-\sqrt{2})^n} ~=~ \textstyle{\sum_{k\geq 0}{n\choose 2k}{\,}2^k.}
$$
\end{proposition}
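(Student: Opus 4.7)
The plan is to enumerate the orbits by choosing a canonical representative in each one. By Lemma~\ref{lemma:leqn}, every orbit in $\mathcal{F}_n$ under the action of $\mathfrak{S}_n$ contains exactly one ordinal sum of projections on $(X_n,\leq_n)$, so $r(n)$ equals the number of such ordinal sums. By Corollary~\ref{cor:kimura}, an ordinal sum of projections on $(X_n,\leq_n)$ is specified by a partition of $X_n$ into $\leq_n$-convex blocks $C_1<_n\cdots<_n C_k$, i.e.\ a composition $(n_1,\ldots,n_k)$ of $n$ with $n_i=|C_i|$, together with a map $\varepsilon\colon\{1,\ldots,k\}\to\{1,2\}$. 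The key observation is that on a singleton block the operations $\pi_1$ and $\pi_2$ coincide, so only non-singleton blocks contribute a free binary choice. Hence
\[
r(n) \;=\; \sum_{k\geq 1}\;\sum_{\substack{n_1+\cdots+n_k=n\\ n_i\geq 1}} 2^{|\{i\,:\,n_i\geq 2\}|},
\]
with $r(0)=1$ corresponding to the empty composition.

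Next I would translate this weighted composition count into a generating function. The weight of a single part of size $m$ is $1$ if $m=1$ and $2$ if $m\geq 2$, so the generating function of a single part is
\[
f(z) \;=\; z+2\sum_{m\geq 2} z^m \;=\; z+\frac{2z^2}{1-z} \;=\; \frac{z+z^2}{1-z}.
\]
Since a composition is a finite (possibly empty) sequence of parts,
\[
R(z) \;=\; \sum_{k\geq 0} f(z)^k \;=\; \frac{1}{1-f(z)} \;=\; \frac{1-z}{1-2z-z^2},
\]
which is the announced generating function. Clearing denominators to obtain $(1-2z-z^2)R(z)=1-z$ and comparing the coefficients of $z^{n+2}$ yields the linear recurrence $r(n+2)=2r(n+1)+r(n)$, while the coefficients of $z^0$ and $z^1$ recover $r(0)=1$ and $r(1)=1$.

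Finally, I would solve the recurrence by its characteristic equation $x^2-2x-1=0$, whose roots are $1\pm\sqrt{2}$. Fitting the initial conditions forces both coefficients to be $\tfrac{1}{2}$, giving
\[
r(n) \;=\; \tfrac{1}{2}(1+\sqrt{2})^n+\tfrac{1}{2}(1-\sqrt{2})^n.
\]
Expanding the two powers by the binomial theorem, the terms with an odd power of $\sqrt{2}$ cancel in pairs and the even terms $k=2j$ combine to give the stated $\sum_{j\geq 0}\binom{n}{2j}2^j$.

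The only delicate step is the first one: one must notice that ordinal sums which differ only in the value of $\varepsilon$ on a singleton block are literally equal as operations, so the naive count $2^k$ has to be replaced by $2^{|\{i:n_i\geq 2\}|}$. Once that reduction is made, everything else is a standard generating-function and linear-recurrence calculation.
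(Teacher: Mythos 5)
Your proof is correct and follows essentially the same route as the paper: both reduce via Lemma~\ref{lemma:leqn} to counting ordinal sums of projections on $(X_n,\leq_n)$, weighting each block of size at least two by a factor of $2$. The only cosmetic difference is that you obtain the generating function directly from the composition structure, whereas the paper first derives the full-history recurrence $r(n)=r(n-1)+2\sum_{i=2}^{n}r(n-i)$ by stripping off the top block; your explicit weighted-composition formula is exactly the one recorded in the Remark following the proposition.
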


\begin{proof}
We clearly have $r(0)=r(1)=1$. Now let $n\geq 2$. By Lemma~\ref{lemma:leqn} the number $r(n)$ is nothing other than the number of ordinal sums of projections on $(X_n,\leq_n)$. If $F$ is such an ordinal sum, then the restriction of $F$ to $X'=X_n\setminus C_k$ is an ordinal sum of projections on $(X',{\leq_n}|_{X'})$. Since there are two possible projections on $C_k$ whenever $n_k\geq 2$, it follows that the sequence $(r(n))_{n\geq 0}$ must satisfy the recurrence equation
$$
r(n) ~=~ r(n-1)+2{\,}\sum_{i=2}^nr(n-i),\qquad n\geq 2.
$$
From this recurrence equation, we immediately derive the claimed one. The rest of the proposition follows straightforwardly.
\end{proof}

\begin{remark}
We observe that an alternative expression for $r(n)$ is given by
$$
r(n) ~=~ \sum_{k=1}^n ~\sum_{\textstyle{n_1,\ldots,n_k\geq 1\atop n_1+\cdots +n_k=n}}\prod_{\textstyle{i=1\atop n_i\geq 2}}^k 2{\,},\qquad n\geq 1.
$$
Indeed, by Lemma~\ref{lemma:leqn} the number $r(n)$ is precisely the number of ordinal sums of projections on $\leq_n$. To compute this number, we need to consider all the unordered partitions of $X_n$ and count twice each subset containing at least two elements (because the two projections are to be considered for each such set). Actually, the product provides the exact number of orbits in $\mathcal{F}_n$ corresponding to the signature $(n_1,\ldots,n_k)$.
\end{remark}

Figure~\ref{fig:20g} provides the contour plots of the $q(3)=20$ operations of $\mathcal{F}_3$, when $X_3$ is endowed with $\leq_3$. These operations are organized in a $7\times 6$ array. Those in the first column consist of the $r(3)=7$ ordinal sums of projections on $\leq_3$. Each of the rows represents an orbit and contains all the possible conjugates of the leftmost operation (we omit the duplicates). In turn, the orbits are grouped into $s(3)=4$ different signatures. Also, all these 20 operations are grouped into $p(3)=13$ weak orderings (represented by rounded boxes).

\setlength{\unitlength}{1.9ex}
\begin{figure}[tbp]
\begin{center}
\begin{footnotesize}
\begin{tabular}{cccccc|cc}
$({123\atop 123_{\mathstrut}})$ & $({123\atop 132})$ & $({123\atop 213})$ & $({123\atop 231})$ & $({123\atop 312})$ & $({123\atop 321})$ & $\mathrm{sgn}(F)$ & $|F^{-1}|$\\
\hline
\ovalbox{
\begin{minipage}{3\unitlength}
\begin{center}
\begin{picture}(3,3)
\multiput(0.5,0.5)(0,1){3}{\multiput(0,0)(1,0){3}{\circle*{0.3}}}
\drawline[1](0.5,0.5)(0.5,2.5)\drawline[1](1.5,0.5)(1.5,2.5)\drawline[1](2.5,0.5)(2.5,2.5)
\end{picture}
\\
\begin{picture}(3,3)
\multiput(0.5,0.5)(0,1){3}{\multiput(0,0)(1,0){3}{\circle*{0.3}}}
\drawline[1](0.5,2.5)(2.5,2.5)\drawline[1](0.5,1.5)(2.5,1.5)\drawline[1](0.5,0.5)(2.5,0.5)
\end{picture}
\\
\begin{tiny}$1{\sim}2{\sim}3$\end{tiny}
\end{center}
\end{minipage}
}
&
&
&
&
&
& $(3)$ &
$(3,3,3)$\\
\ovalbox{
\begin{minipage}{3\unitlength}
\begin{center}
\begin{picture}(3,3)
\multiput(0.5,0.5)(0,1){3}{\multiput(0,0)(1,0){3}{\circle*{0.3}}}
\drawline[1](0.5,2.5)(2.5,2.5)(2.5,0.5)\drawline[1](0.5,1.5)(0.5,0.5)\drawline[1](1.5,1.5)(1.5,0.5)
\end{picture}
\\
\begin{picture}(3,3)
\multiput(0.5,0.5)(0,1){3}{\multiput(0,0)(1,0){3}{\circle*{0.3}}}
\drawline[1](0.5,2.5)(2.5,2.5)(2.5,0.5)\drawline[1](0.5,1.5)(1.5,1.5)\drawline[1](0.5,0.5)(1.5,0.5)
\end{picture}
\\
\begin{tiny}$1{\sim}2{\prec}3$\end{tiny}
\end{center}
\end{minipage}
}
&
\ovalbox{
\begin{minipage}{3\unitlength}
\begin{center}
\begin{picture}(3,3)
\multiput(0.5,0.5)(0,1){3}{\multiput(0,0)(1,0){3}{\circle*{0.3}}}
\drawline[1](0.5,1.5)(2.5,1.5)\drawline[1](1.5,0.5)(1.5,2.5)
\put(2.5,1.5){\oval(0.6,2)[r]}\put(0.5,1.5){\oval(0.6,2)[l]}
\end{picture}
\\
\begin{picture}(3,3)
\multiput(0.5,0.5)(0,1){3}{\multiput(0,0)(1,0){3}{\circle*{0.3}}}
\drawline[1](0.5,1.5)(2.5,1.5)\drawline[1](1.5,0.5)(1.5,2.5)
\put(1.5,0.5){\oval(2,0.6)[b]}\put(1.5,2.5){\oval(2,0.6)[t]}
\end{picture}
\\
\begin{tiny}$1{\sim}3{\prec}2$\end{tiny}
\end{center}
\end{minipage}
}
&
&
\ovalbox{
\begin{minipage}{3\unitlength}
\begin{center}
\begin{picture}(3,3)
\multiput(0.5,0.5)(0,1){3}{\multiput(0,0)(1,0){3}{\circle*{0.3}}}
\drawline[1](0.5,2.5)(0.5,0.5)(2.5,0.5)\drawline[1](1.5,2.5)(1.5,1.5)\drawline[1](2.5,2.5)(2.5,1.5)
\end{picture}
\\
\begin{picture}(3,3)
\multiput(0.5,0.5)(0,1){3}{\multiput(0,0)(1,0){3}{\circle*{0.3}}}
\drawline[1](0.5,2.5)(0.5,0.5)(2.5,0.5)\drawline[1](1.5,2.5)(2.5,2.5)\drawline[1](1.5,1.5)(2.5,1.5)
\end{picture}
\\
\begin{tiny}$2{\sim}3{\prec}1$\end{tiny}
\end{center}
\end{minipage}
}
&
&
& $(2,1)$
&
$(2,2,5)$\\
\ovalbox{
\begin{minipage}{3\unitlength}
\begin{center}
\begin{picture}(3,3)
\multiput(0.5,0.5)(0,1){3}{\multiput(0,0)(1,0){3}{\circle*{0.3}}}
\drawline[1](2.5,2.5)(2.5,0.5)\drawline[1](1.5,2.5)(1.5,0.5)\drawline[1](0.5,1.5)(1.5,1.5)
\put(1.5,2.5){\oval(2,0.6)[t]}
\end{picture}
\\
\begin{picture}(3,3)
\multiput(0.5,0.5)(0,1){3}{\multiput(0,0)(1,0){3}{\circle*{0.3}}}
\drawline[1](0.5,2.5)(2.5,2.5)\drawline[1](0.5,1.5)(2.5,1.5)\drawline[1](1.5,1.5)(1.5,0.5)
\put(2.5,1.5){\oval(0.6,2)[r]}
\end{picture}
\\
\begin{tiny}$1{\prec}2{\sim}3$\end{tiny}
\end{center}
\end{minipage}
}
&
&
\ovalbox{
\begin{minipage}{3\unitlength}
\begin{center}
\begin{picture}(3,3)
\multiput(0.5,0.5)(0,1){3}{\multiput(0,0)(1,0){3}{\circle*{0.3}}}
\drawline[1](0.5,2.5)(0.5,0.5)(1.5,0.5)\drawline[1](1.5,2.5)(2.5,2.5)(2.5,0.5)
\end{picture}
\\
\begin{picture}(3,3)
\multiput(0.5,0.5)(0,1){3}{\multiput(0,0)(1,0){3}{\circle*{0.3}}}
\drawline[1](0.5,2.5)(2.5,2.5)(2.5,1.5)\drawline[1](0.5,1.5)(0.5,0.5)(2.5,0.5)
\end{picture}
\\
\begin{tiny}$2{\prec}1{\sim}3$\end{tiny}
\end{center}
\end{minipage}
}
&
&
\ovalbox{
\begin{minipage}{3\unitlength}
\begin{center}
\begin{picture}(3,3)
\multiput(0.5,0.5)(0,1){3}{\multiput(0,0)(1,0){3}{\circle*{0.3}}}
\drawline[1](0.5,2.5)(0.5,0.5)\drawline[1](1.5,2.5)(1.5,0.5)\drawline[1](1.5,1.5)(2.5,1.5)
\put(1.5,0.5){\oval(2,0.6)[b]}
\end{picture}
\\
\begin{picture}(3,3)
\multiput(0.5,0.5)(0,1){3}{\multiput(0,0)(1,0){3}{\circle*{0.3}}}
\drawline[1](0.5,0.5)(2.5,0.5)\drawline[1](0.5,1.5)(2.5,1.5)\drawline[1](1.5,1.5)(1.5,2.5)
\put(0.5,1.5){\oval(0.6,2)[l]}
\end{picture}
\\
\begin{tiny}$3{\prec}1{\sim}2$\end{tiny}
\end{center}
\end{minipage}
}
&
& $(1,2)$ &
$(1,4,4)$\\
\ovalbox{
\begin{minipage}{3\unitlength}
\begin{center}
\begin{picture}(3,3)
\multiput(0.5,0.5)(0,1){3}{\multiput(0,0)(1,0){3}{\circle*{0.3}}}
\drawline[1](0.5,2.5)(2.5,2.5)(2.5,0.5)\drawline[1](0.5,1.5)(1.5,1.5)(1.5,0.5)
\end{picture}
\\
\begin{tiny}$1{\prec}2{\prec}3$\end{tiny}
\end{center}
\end{minipage}
}
&
\ovalbox{
\begin{minipage}{3\unitlength}
\begin{center}
\begin{picture}(3,3)
\multiput(0.5,0.5)(0,1){3}{\multiput(0,0)(1,0){3}{\circle*{0.3}}}
\drawline[1](0.5,1.5)(2.5,1.5)\drawline[1](1.5,0.5)(1.5,2.5)
\put(2.5,1.5){\oval(0.6,2)[r]}\put(1.5,2.5){\oval(2,0.6)[t]}
\end{picture}
\\
\begin{tiny}$1{\prec}3{\prec}2$\end{tiny}
\end{center}
\end{minipage}
}
&
\ovalbox{
\begin{minipage}{3\unitlength}
\begin{center}
\begin{picture}(3,3)
\multiput(0.5,0.5)(0,1){3}{\multiput(0,0)(1,0){3}{\circle*{0.3}}}
\drawline[1](0.5,2.5)(2.5,2.5)(2.5,0.5)\drawline[1](0.5,1.5)(0.5,0.5)(1.5,0.5)
\end{picture}
\\
\begin{tiny}$2{\prec}1{\prec}3$\end{tiny}
\end{center}
\end{minipage}
}
&
\ovalbox{
\begin{minipage}{3\unitlength}
\begin{center}
\begin{picture}(3,3)
\multiput(0.5,0.5)(0,1){3}{\multiput(0,0)(1,0){3}{\circle*{0.3}}}
\drawline[1](0.5,2.5)(0.5,0.5)(2.5,0.5)\drawline[1](1.5,2.5)(2.5,2.5)(2.5,1.5)
\end{picture}
\\
\begin{tiny}$2{\prec}3{\prec}1$\end{tiny}
\end{center}
\end{minipage}
}
&
\ovalbox{
\begin{minipage}{3\unitlength}
\begin{center}
\begin{picture}(3,3)
\multiput(0.5,0.5)(0,1){3}{\multiput(0,0)(1,0){3}{\circle*{0.3}}}
\drawline[1](0.5,1.5)(2.5,1.5)\drawline[1](1.5,0.5)(1.5,2.5)
\put(0.5,1.5){\oval(0.6,2)[l]}\put(1.5,0.5){\oval(2,0.6)[b]}
\end{picture}
\\
\begin{tiny}$3{\prec}1{\prec}2$\end{tiny}
\end{center}
\end{minipage}
}
&
\ovalbox{
\begin{minipage}{3\unitlength}
\begin{center}
\begin{picture}(3,3)
\multiput(0.5,0.5)(0,1){3}{\multiput(0,0)(1,0){3}{\circle*{0.3}}}
\drawline[1](0.5,2.5)(0.5,0.5)(2.5,0.5)\drawline[1](1.5,2.5)(1.5,1.5)(2.5,1.5)
\end{picture}
\\
\begin{tiny}$3{\prec}2{\prec}1$\end{tiny}
\end{center}
\end{minipage}
}
& $(1,1,1)$ &
$(1,3,5)$\\
\end{tabular}
\end{footnotesize}
\caption{Classifications of the 20 associative and quasitrivial operations on $(X_3,\leq_3)$}
\label{fig:20g}
\end{center}
\end{figure}

Proposition~\ref{prop:s7adf5} can also be easily illustrated in Figure~\ref{fig:20g} as follows. Any permutation $\sigma\in\mathfrak{S}_3$ that maps $F$ to $F_{\sigma}$ can be extended to a permutation of the corresponding rounded boxes (within the same signature).

We end this section by a discussion on the concept of preimage sequence. We know from Proposition~\ref{prop:444} that the preimage sequence of any operation $F\in\mathcal{F}_n$ contains the same information as its signature. Also, it has been shown \cite[Prop.~2.2]{CouDevMar2} that
\begin{equation}\label{eq:sd54}
|F^{-1}[x]| ~=~ 2\times |\{z\in X_n:z\prec_F x\}|+|\{z\in X_n:z\sim_F x\}|,\qquad x\in X_n.
\end{equation}
From the latter identity we can actually derive the following formula:
\begin{equation}\label{eq:sd541}
|F^{-1}| ~=~ ({\,}\underbrace{n_1}_{n_1}{\,},{\,}\underbrace{2{\,}n_1+n_2}_{n_2}{\,},\ldots,{\,}\underbrace{\textstyle{2\sum_{i<k}n_i+n_k}}_{n_k}{\,}).
\end{equation}
Conversely, the signature $\mathrm{sgn}(F)=(n_1,\ldots,n_k)$ can be obtained immediately by considering the absolute frequencies of the sequence $|F^{-1}|$. That is, if $d_1,\ldots,d_k$ represent the distinct values of the sequence $|F^{-1}|$ in increasing order, then $n_i$ is the number of times $d_i$ occurs in $|F^{-1}|$.\footnote{In particular, we observe that, for any $F\in\mathcal{F}_n$, the number of distinct values in the sequence $|F^{-1}|$ is exactly $|X_n/{\sim_F}|$.} To give an example, let $F\in\mathcal{F}_9$ be such that $\mathrm{sgn}(F)=(1,2,2,1,3)$. Then
$$
|F^{-1}| ~=~ (1,4,4,8,8,11,15,15,15).
$$

The following proposition solves the natural question of finding necessary and sufficient conditions for a nondecreasing $n$-sequence $(c_1,\ldots,c_n)$ to be the preimage sequence of an operation $F\in\mathcal{F}_n$.

\begin{proposition}\label{prop:s4d3}
Let $c=(c_1,\ldots,c_n)$ be a nondecreasing $n$-sequence. Then there exists $F\in\mathcal{F}_n$ such that $|F^{-1}|=c$ if and only if
\begin{equation}\label{eq:s4d3}
c_i ~=~ \min\{j:c_j=c_i\}+\max\{j:c_j=c_i\}-1,\qquad i=1,\ldots,n.
\end{equation}
\end{proposition}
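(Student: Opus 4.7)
The plan is to derive both directions directly from the explicit formula \eqref{eq:sd541}, which completely describes the structure of $|F^{-1}|$ in terms of the signature $\mathrm{sgn}(F)=(n_1,\ldots,n_k)$. The key observation is that \eqref{eq:sd541} tells us that $|F^{-1}|$ consists of $k$ consecutive constant blocks, the $i$th block being of length $n_i$ with common value $d_i=2\sum_{j<i}n_j+n_i$, and that $d_1<d_2<\cdots<d_k$. So the condition \eqref{eq:s4d3} is really a condition forcing the value of each block to equal twice the number of entries strictly below it plus the length of the block itself.

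For necessity, I would assume $F\in\mathcal{F}_n$ with signature $(n_1,\ldots,n_k)$ and fix an index $i\in\{1,\ldots,n\}$. By \eqref{eq:sd541}, there is a unique $\ell\in\{1,\ldots,k\}$ such that $c_i=d_\ell$, and the positions where the value $d_\ell$ appears in $|F^{-1}|$ form the integer interval $\bigl[\sum_{j<\ell}n_j+1,{\,}\sum_{j\leq\ell}n_j\bigr]$. Therefore
\begin{align*}
\min\{j:c_j=c_i\}+\max\{j:c_j=c_i\}-1
&= \Bigl(\textstyle\sum_{j<\ell}n_j+1\Bigr)+\Bigl(\textstyle\sum_{j\leq\ell}n_j\Bigr)-1\\
&= 2\textstyle\sum_{j<\ell}n_j+n_\ell ~=~ d_\ell ~=~ c_i,
\end{align*}
which yields \eqref{eq:s4d3}.

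For sufficiency, I would start with a nondecreasing $n$-sequence $c=(c_1,\ldots,c_n)$ satisfying \eqref{eq:s4d3}, let $d_1<\cdots<d_k$ be its distinct values with respective multiplicities $n_1,\ldots,n_k$, so that $n_1+\cdots+n_k=n$. For each $\ell\in\{1,\ldots,k\}$ and each $i$ with $c_i=d_\ell$, the hypothesis \eqref{eq:s4d3} combined with the fact that the value $d_\ell$ occurs in positions $\sum_{j<\ell}n_j+1,\ldots,\sum_{j\leq\ell}n_j$ yields $d_\ell=2\sum_{j<\ell}n_j+n_\ell$. Now I would construct an explicit witness: define $F\colon X_n^2\to X_n$ to be the ordinal sum of projections on $(X_n,\leq_n)$ associated with the partition of $X_n$ into the consecutive $\leq_n$-convex blocks $C_\ell=\bigl[\sum_{j<\ell}n_j+1,{\,}\sum_{j\leq\ell}n_j\bigr]$ (with, say, $\varepsilon\equiv 1$, the choice being irrelevant here). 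By Corollary~\ref{cor:kimura}, $F\in\mathcal{F}_n$, its signature is $(n_1,\ldots,n_k)$, and applying \eqref{eq:sd541} shows that $|F^{-1}|$ consists of $n_\ell$ copies of $2\sum_{j<\ell}n_j+n_\ell=d_\ell$ for each $\ell$, that is, $|F^{-1}|=c$.

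There is no serious obstacle: the whole argument is a bookkeeping computation built on \eqref{eq:sd541}. The only point requiring a bit of care is to check in the sufficiency direction that the $d_\ell$'s extracted from $c$ are forced to take precisely the values prescribed by \eqref{eq:sd541}, which is exactly what \eqref{eq:s4d3} guarantees.
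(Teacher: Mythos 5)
Your proof is correct and takes essentially the same route as the paper's primary argument: both directions are read off from the block-structure formula \eqref{eq:sd541} (equivalently \eqref{eq:sd54}), with sufficiency obtained by realizing the multiplicity vector $(n_1,\ldots,n_k)$ as the signature of some ordinal sum of projections. The paper additionally records a second, inductive construction for the sufficiency direction, but your computation with \eqref{eq:sd541} already suffices.
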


\begin{proof}
(Necessity) Replacing $F$ with one of its conjugates if necessary, we can assume that $|F^{-1}[1]|\leq\cdots\leq |F^{-1}[n]|$. By \eqref{eq:of1}, we then have $1\precsim\cdots\precsim n$. For every $i\in\{1,\ldots,n\}$, define
\begin{eqnarray*}
p_i &=& \min\{j:|F^{-1}[j]|=|F^{-1}[i]|\},\\
q_i &=& \max\{j:|F^{-1}[j]|=|F^{-1}[i]|\}.
\end{eqnarray*}
By \eqref{eq:sd54}, we then have $|F^{-1}[i]|=2(p_i-1)+(q_i-p_i+1)=p_i+q_i-1$.

(Sufficiency) Let $c=(c_1,\ldots,c_n)$ be a nondecreasing $n$-sequence satisfying the stated condition and let $n_1,\ldots,n_k$ be the absolute frequencies of this sequence. Take any $F\in\mathcal{F}_n$ such that $\mathrm{sgn}(F)=(n_1,\ldots,n_k)$. By definition, for any $\ell\in\{1,\ldots,k\}$, all the components of $c$ corresponding to frequency $n_{\ell}$ are equal to the number
$$
\textstyle{(\sum_{i<\ell} n_i+1)+(\sum_{i<\ell} n_i + n_{\ell})-1 ~=~ 2\sum_{i<\ell} n_i+n_{\ell}.}
$$
Equation~\eqref{eq:sd541} then shows that $|F^{-1}|=c$.

Let us provide an alternative proof that does not make use of \eqref{eq:sd541}. We proceed by induction on $n$. The result clearly holds for $n=1$. Suppose that it holds for any $\ell\leq n-1$ and let us prove that it still holds for $n$.

Let $c=(c_1,\ldots,c_n)$ be a nondecreasing $n$-sequence satisfying the stated condition. If $c_1=c_n$, then we can take $F=\pi_1$ or $F=\pi_2$ on $X_n$. If $c_1<c_n$, then let $\ell =\max\{j:c_j<c_n\}$. By the induction hypothesis, there exists $F_{\ell}\in\mathcal{F}_{\ell}$ such that $|F_{\ell}^{-1}|=(c_1,\ldots,c_{\ell})$. Now, let $F\colon X^2_n\to X_n$ be defined by
$$
F(x,y) ~=~
\begin{cases}
F_{\ell}(x,y), & \text{if $x,y\in X_{\ell}$},\\
\pi_1(x,y), & \text{if $x,y\in X_n\setminus X_{\ell}$},\\
\max(x,y), & \text{otherwise}.
\end{cases}
$$
Then it is not difficult to see that $F\in\mathcal{F}_n$ and that $|F^{-1}|=c$.
\end{proof}

\begin{remark}
There are quasitrivial operations $F\colon X_n^2\to X_n$ that are not associative and whose preimage sequences $|F^{-1}|$ satisfy condition \eqref{eq:s4d3}. The operation $F\colon X_3^2\to X_3$ whose contour plot is shown in Figure~\ref{fig:378} could serve as an example here.
\end{remark}

\setlength{\unitlength}{3.5ex}
\begin{figure}[htbp]
\begin{center}
\begin{picture}(3,3)
\multiput(0.5,0.5)(0,1){3}{\multiput(0,0)(1,0){3}{\circle*{0.2}}}
\drawline[1](0.5,0.5)(1.5,0.5)\drawline[1](0.5,1.5)(1.5,1.5)(1.5,2.5)\drawline[1](2.5,0.5)(2.5,2.5)
\put(0.5,1.5){\oval(0.6,2)[l]}
\end{picture}
\caption{A quasitrivial operation $F\colon X_3^2\to X_3$ that is not associative}
\label{fig:378}
\end{center}
\end{figure}
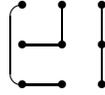

\section{Order-preserving operations}

Recall that an operation $F\colon X^2\to X$ is said to be \emph{$\leq$-preserving} for some total ordering $\leq$ on $X$ if for any $x,y,x',y'\in X$ such that $x\leq x'$ and $y\leq y'$, we have $F(x,y)\leq F(x',y')$.

\begin{definition}
We say that an operation $F\colon X^2\to X$ is \emph{order-preservable} if there exists a total ordering $\leq$ on $X$ for which $F$ is $\leq$-preserving.
\end{definition}

In this section we provide characterizations of the operations $F\in\mathcal{F}$ that are order-preservable (see Proposition~\ref{prop:s76dfv} below). To this extent, we first recall a characterization of the operations $F\in\mathcal{F}$ that are $\leq$-preserving for some given total ordering $\leq$ on $X$. Recall also that a weak ordering $\precsim$ on $X$ is said to be \emph{single-plateaued} (or \emph{weakly single-peaked}) \cite[Def.~4.3]{CouDevMar2} for some total ordering $\leq$ on $X$ if for any $a,b,c\in X$ such that $a<b<c$, we have $b\prec a$ or $b\prec c$ or $a\sim b\sim c$.

\begin{proposition}[{see \cite[Prop.~4.4]{CouDevMar2}}]\label{prop:sf7dw}
Let $\leq$ be a total ordering on $X$ and let $F\in\mathcal{F}$. Then $F$ is $\leq$-preserving if and only if $\precsim_F$ is single-plateaued for $\leq$.
\end{proposition}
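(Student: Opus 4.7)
The plan is to prove both directions using Theorem~\ref{thm:kimura}, which pins down $F$ completely from $\precsim_F$: on each $\sim_F$-class $A$, $F|_{A \times A}$ is either $\pi_1$ or $\pi_2$, and between two distinct classes $F$ agrees with $\max_{\precsim_F}$. Throughout I will exploit the ``transpose'' symmetry $F \mapsto F^t$, where $F^t(u,v) = F(v,u)$: this map preserves $\mathcal{F}$, preserves $\precsim_F$ (by \eqref{eq:1of1}), and preserves $\leq$-preservation, so it lets me halve the case count in each direction.

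For sufficiency, I assume $\precsim_F$ is single-plateaued for $\leq$ and suppose, toward a contradiction, that there exist $x \leq x'$ and $y \leq y'$ with $F(x,y) > F(x',y')$. Quasitriviality puts $F(x,y) \in \{x,y\}$ and $F(x',y') \in \{x',y'\}$, giving four sub-cases; the alternatives $F(x,y) = x,\, F(x',y') = x'$ and $F(x,y) = y,\, F(x',y') = y'$ immediately contradict $x \leq x'$ and $y \leq y'$, respectively, while the remaining two are exchanged by the transpose symmetry, so I only handle $F(x,y) = x,\, F(x',y') = y'$. Then $y \leq y' < x \leq x'$, and Theorem~\ref{thm:kimura} forces $y \precsim_F x$ and $x' \precsim_F y'$. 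The key step is to apply the single-plateau condition to each non-degenerate triple drawn from $\{y, y', x, x'\}$ listed in $\leq$-order (handling the degenerate coincidences $y = y'$ or $x = x'$ separately); combining the outcomes with the two derived inequalities rules out every alternative except $y \sim_F y' \sim_F x \sim_F x'$. Once all four points lie in a single $\sim_F$-class, $F$ restricts there to a single projection $\pi_i$, and $F(x,y) = x$ forces $i = 1$ while $F(x',y') = y'$ forces $i = 2$, contradiction.

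For necessity, I assume $\precsim_F$ is not single-plateaued for $\leq$, so there exist $a < b < c$ with $a \precsim_F b$, $c \precsim_F b$, and $\lnot(a \sim_F b \sim_F c)$. Transitivity of $\sim_F$ forces $a \prec_F b$ or $c \prec_F b$; by the transpose symmetry I may assume $a \prec_F b$, whence $F(a,b) = F(b,a) = b$. I then split on the $\precsim_F$-relation between $a$ and $c$ (and, whenever two of the elements lie in the same $\sim_F$-class, on whether $F$ acts there as $\pi_1$ or $\pi_2$) and in each sub-case produce a pair $(u,v) \leq (u',v')$ with $F(u,v) > F(u',v')$. Representative witnesses are $(a,c) \leq (b,c)$ when $a \prec_F c$ with $F$ acting as $\pi_1$ on the top class, its transpose $(c,a) \leq (c,b)$ in the $\pi_2$-sub-case, $(a,b) \leq (a,c)$ (or its transpose) when $a \sim_F c \prec_F b$, and $(b,a) \leq (c,a)$ when $c \prec_F a$.

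The main obstacle is the bookkeeping in the sufficiency direction: one must simultaneously exploit the two inequalities $y \precsim_F x$ and $x' \precsim_F y'$, the single-plateau constraints on the (up to four) triples extracted from $\{y,y',x,x'\}$, and the possible coincidences among these four points, in order to collapse the configuration into a single $\sim_F$-class on which $F$ must be both $\pi_1$ and $\pi_2$. The necessity direction is lighter but still requires the sub-case split above. Neither direction appears to admit a shorter uniform argument, precisely because the single-plateau condition is itself stated in terms of triples.
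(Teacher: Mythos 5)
The paper itself does not prove this proposition; it simply imports it from \cite[Prop.~4.4]{CouDevMar2}. Your self-contained argument via Theorem~\ref{thm:kimura} is therefore a genuinely different (and more informative) route, and I have checked that its core works: in the sufficiency direction, the reduction of the four quasitriviality sub-cases to the single configuration $y\leq y'<x\leq x'$ with $y\precsim_F x$ and $x'\precsim_F y'$ is correct, the transpose $F\mapsto F^t$ does legitimately exchange the two nontrivial sub-cases there, and applying the single-plateau condition to the triples $(y',x,x')$ and $(y,y',x)$ (plus the degenerate coincidences) does force $y\sim_F y'\sim_F x\sim_F x'$, whence the $\pi_1$/$\pi_2$ clash. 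The witnesses you list for the necessity direction also all produce genuine violations of $\leq$-preservation.

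There is, however, one step that fails as written: in the necessity direction you reduce the alternative ``$a\prec_F b$ or $c\prec_F b$'' to the single case $a\prec_F b$ ``by the transpose symmetry.'' The transposition $F\mapsto F^t$ fixes both $\leq$ and $\precsim_F$ (the latter by \eqref{eq:1of1}), so it maps the configuration $a<b<c$ with $c\prec_F b$ and $a\sim_F b$ to itself; it cannot interchange the roles of $a$ and $c$. The symmetry you actually need here is reversal of the total ordering $\leq$: with $\leq^*$ the reverse order, $F$ is $\leq$-preserving iff it is $\leq^*$-preserving, single-plateauedness is invariant under this reversal, and the triple becomes $c<^*b<^*a$ with its $\leq^*$-smallest element strictly below the middle one for $\precsim_F$, which is the case you treat. (Alternatively, the leftover case $c\prec_F b\sim_F a$ is dispatched directly by the pair $(a,b)\leq(a,c)$ or its transpose $(b,a)\leq(c,a)$, since $F(a,c)=F(c,a)=a$ while one of $F(a,b),F(b,a)$ equals $b>a$.) With that substitution the proof is complete; note that transposition remains the correct tool for the other halving you perform, namely swapping $\pi_1$ and $\pi_2$ on a fixed $\sim_F$-class.
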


We say that a weak ordering $\precsim$ on $X$ is \emph{$2$-quasilinear} if there are no pairwise distinct $a,b,c,d\in X$ such that $a\prec b\sim c\sim d$. That is, a weak ordering $\precsim$ on $X$ is $2$-quasilinear if and only if every set $C\in X/{\sim}$ that is not minimal for $\precsim$ contains at most two elements of $X$.

\begin{proposition}\label{prop:mainFnF}
Assume the axiom of choice. A weak ordering on $X$ is $2$-quasilinear if and only if it is single-plateaued for some total ordering on $X$.
\end{proposition}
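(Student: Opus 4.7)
My plan is to prove the two implications separately: sufficiency by contradiction, and necessity by an explicit construction using the axiom of choice.

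For sufficiency, suppose $\precsim$ is single-plateaued for some total ordering $\leq$ on $X$ yet fails $2$-quasilinearity, so that there exist pairwise distinct $a,b,c,d \in X$ with $a \prec b \sim c \sim d$. I will relabel $b,c,d$ in $\leq$-increasing order as $p_1 < p_2 < p_3$ and split on the $\leq$-position of $a$ (four subcases: $a < p_1$, $p_1 < a < p_2$, $p_2 < a < p_3$, $p_3 < a$). In each subcase I extract a $\leq$-triple $x < y < z$ from $\{a,p_1,p_2,p_3\}$ whose middle entry $y$ lies in $\{p_1,p_2,p_3\}$ and whose outer entries consist of $a$ together with some $p_j$ different from $y$ (for instance $(a,p_1,p_2)$, $(a,p_2,p_3)$, $(p_1,p_2,a)$, $(p_2,p_3,a)$ in the four subcases respectively). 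Single-plateauedness would demand $y \prec x$, $y \prec z$, or $x \sim y \sim z$, but $y \sim p_j$ blocks the alternative involving $p_j$, the strict relation $y \succ a$ blocks the alternative involving $a$, and $a \not\sim y$ blocks the triple-equivalence, yielding the desired contradiction.

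For necessity, let $C_0$ denote the $\precsim$-minimum class of $X/{\sim}$ if it exists and $C_0 = \emptyset$ otherwise; by $2$-quasilinearity every non-minimum class has cardinality at most two. Using the axiom of choice, I partition every non-minimum class between two bins $X_L$ and $X_R$: each size-two class contributes one element to each bin, and each size-one class contributes its unique element to exactly one bin (arbitrarily). Distinct elements of $X_L$ (resp.\ $X_R$) then lie in distinct $\sim$-classes, so the restriction of $\precsim$ to each bin is a strict total order. I define $\leq$ by concatenating the blocks $X_L < C_0 < X_R$, equipping $X_L$ with the reverse of $\precsim$, $X_R$ with $\precsim$ itself, and $C_0$ with an arbitrary total order (again via the axiom of choice).

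Verifying single-plateauedness reduces to a routine case check on which of the three blocks contains each entry of a triple $a < b < c$: intra-$X_L$ triples yield $b \prec a$, intra-$X_R$ triples yield $b \prec c$, intra-$C_0$ triples yield $a \sim b \sim c$, and every mixed triple either places two of the entries in the same outside-$C_0$ bin (invoking the intra-bin rule) or has $b \in C_0$, in which case $\precsim$-minimality of $C_0$ forces $b \prec a$ or $b \prec c$ for whichever outer entry lies outside $C_0$. The main obstacle I foresee is ensuring robustness when no $\precsim$-minimum class exists, so that $C_0 = \emptyset$ and $X = X_L \sqcup X_R$; but the same case check applies verbatim with one fewer block and no new cases arise.
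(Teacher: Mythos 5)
Your proof is correct, and it follows the same underlying construction as the paper while improving the presentation in two respects. For necessity, your concatenation $X_L < C_0 < X_R$ (one representative of each non-minimal class on the left in reverse $\precsim$-order, the $\precsim$-minimal class in the middle, the partners from the two-element classes on the right in $\precsim$-order) is precisely the total ordering the paper builds; the paper, however, defines it through local comparison rules keyed to whether an element is the chosen minimum or maximum of its $\sim$-class, and must then verify antisymmetry and transitivity case by case (``the other $12$ cases can be verified similarly''). Your global block description makes totality and transitivity automatic and turns the single-plateau verification into the clean block-membership case split you outline; the one point worth keeping explicit is that $X_L$ and $X_R$ each meet every $\sim$-class at most once, so that $\precsim$ restricts to a strict total order on each bin --- which you do note, and which also covers the degenerate situation where no minimal class exists and $C_0=\varnothing$. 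For sufficiency, the paper simply cites \cite[Lemma~4.6]{CouDevMar2}, whereas you give a self-contained four-subcase contradiction starting from a violating quadruple $a\prec b\sim c\sim d$; the argument is sound, since in every subcase the selected triple $x<y<z$ has its middle entry $\sim$-equivalent to one outer entry and strictly $\precsim$-above the other, which rules out all three alternatives in the definition of single-plateauedness.
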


\begin{proof}
(Necessity) Let ${\precsim}$ be a $2$-quasilinear weak ordering on $X$. For any $x\in X$, let $S_{x/{\sim}}$ be a total ordering on $x/{\sim}=\{z\in X:z\sim x\}$. Such total orderings always exist by the axiom of choice.

Consider the binary relation $\leq$ on $X$ whose symmetric part is the identity relation on $X$ and the asymmetric part $<$ is defined as follows. Let $x,y\in X$ be such that $x\neq y$ and $x\precsim y$.
\begin{eqnarray*}
\text{$-~$ If $~x\sim y$} & \text{and} &
\hspace{-0.5ex}
\begin{cases}
~\text{$x{\,}S_{x/{\sim}}{\,}y$}{\,}, & \text{then we set $x<y$.}\\
~\text{$y{\,}S_{x/{\sim}}{\,}x$}{\,}, & \text{then we set $y<x$.}
\end{cases}
\\
\text{$-~$ If $~x\prec y$} & \text{and} &
\hspace{-0.5ex}
\begin{cases}
~\text{$y=\min_{S_{y/{\sim}}}y/{\sim}$}{\,}, & \text{then we set $y<x$.}\\
~\text{$y=\max_{S_{y/{\sim}}}y/{\sim}$ and $|y/{\sim}|=2$}{\,}, & \text{then we set $x<y$.}
\end{cases}
\end{eqnarray*}

Let us show that, thus defined, the relation $\leq$ is a total ordering on $X$. It is clearly total by definition. It is also antisymmetric. Indeed, it is clear that there are no $x,y\in X$ such that $x<y$ and $y<x$. Finally, let us prove by contradiction that it is transitive. Suppose that there are $x,y,z\in X$ such that $x<y$, $y<z$, and $z<x$. Also, suppose for instance that $x\sim y\prec z$ (the other $p(3)-1=12$ cases can be verified similarly). Since $y\prec z$, we must have $z=\max_{S_{z/{\sim}}}z/{\sim}$ and $|z/{\sim}|=2$. Also, since $x\prec z$, we must have $z=\min_{S_{z/{\sim}}}z/{\sim}$, a contradiction.

Let us now show that $\precsim$ is single-plateaued for $\leq$. Let $a,b,c\in X$ such that $a<b<c$, $\neg(a\sim b\sim c)$, and $c\precsim b$. We only need to show that $b\prec a$. We have two exclusive cases to consider.
\begin{itemize}
\item If $c\sim b$, then we clearly have $b{\,}S_{b/{\sim}}{\,}c$. It follows that we cannot have $a\prec b$ and $b=\mathrm{max}_{S_{b/{\sim}}}b/{\sim}$ and $|b/{\sim}|=2$. We cannot have $a\sim b$ either for we have $\neg(a\sim b\sim c)$. Therefore, we must have $b\prec a$.
\item If $c\prec b$, then we have $b=\mathrm{min}_{S_{b/{\sim}}}b/{\sim}$. It follows that we cannot have $a\prec b$ and $b=\mathrm{max}_{S_{b/{\sim}}}b/{\sim}$ and $|b/{\sim}|=2$. Clearly, we cannot have $a\sim b$ and $a{\,}S_{b/{\sim}}{\,}b$ either. Therefore, we must have $b\prec a$.
\end{itemize}
(Sufficiency) This was proved in \cite[Lemma~4.6]{CouDevMar2}.
\end{proof}

\begin{remark}
When $X=X_n$ for some integer $n\geq 1$, the total ordering $\leq$ on $X_n$ mentioned in Proposition~\ref{prop:mainFnF} can be very easily constructed as follows. First, choose a total ordering $S_i$ on each set $C_i$ ($i=1,\ldots,k$). Then, execute the following four-step algorithm.
\begin{itemize}
\item[1.] Let $L$ be the empty list.
\item[2.] For $i=k,\ldots,2$, append the element $\min_{S_i}C_i$ to $L$.
\item[3.] Append to $L$ the elements of $C_1$ in the order given by $S_1$.
\item[4.] For $i=2,\ldots,k$ such that $|C_i|=2$, append the element $\max_{S_i}C_i$ to $L$.
\end{itemize}
The order given by $L$ defines a suitable total ordering $\leq$ on $X_n$. For instance, suppose that we have the following $2$-quasilinear weak ordering $\precsim$ on $X_8$:
$$
1\sim 2\sim 3\prec 4\sim 5\prec 6\prec 7\sim 8.
$$
In each set $C_i$ ($i=1,2,3,4$), we let $S_i$ be the restriction of $\leq_n$ to $C_i$. The list constructed by the algorithm above is then given by $L=(7,6,4,1,2,3,5,8)$ and provides the following total ordering:
$$
7<6<4<1<2<3<5<8.
$$
Finally, we can readily verify that $\precsim$ is single-plateaued for $\leq$.
\end{remark}

\begin{proposition}\label{prop:s76dfv}
Assume the axiom of choice. For any $F\in\mathcal{F}$, the following assertions are equivalent.
\begin{itemize}
\item[(i)] $F$ is order-preservable.
\item[(ii)] There exists $\sigma\in\mathfrak{S}$ such that $F_{\sigma}$ is order-preservable.
\item[(iii)] $F_{\sigma}$ is order-preservable for every $\sigma\in\mathfrak{S}$.
\item[(iv)] ${\precsim_F}$ is $2$-quasilinear.
\end{itemize}
If $X=X_n$ for some integer $n\geq 1$, then any of the assertions above is equivalent to any of the following ones.
\begin{itemize}
\item[(v)] There exists $\sigma\in\mathfrak{S}_n$ such that $F_{\sigma}$ is $\leq_n$-preserving.
\item[(vi)] $n_2,\ldots,n_k\in\{1,2\}$, where $(n_1,\ldots,n_k)=\mathrm{sgn}(F)$.
\item[(vii)] Every integer strictly greater than $c_1$ occurs at most two times in $|F^{-1}|=(c_1,\ldots,c_n)$.
\end{itemize}
Moreover, the equivalence among (i), (ii), (iii), and (v) holds for any operation $F\colon X^2\to X$.
\end{proposition}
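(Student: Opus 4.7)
The proof naturally splits into three parts.

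First I would establish the equivalence among (i), (ii), (iii), and (in the finite case) (v), which is valid for an arbitrary operation $F\colon X^2\to X$. For this, I observe that given a total ordering $\leq$ on $X$ and a permutation $\sigma\in\mathfrak{S}$, the relation $\leq_{\sigma}$ defined on $X$ by $x\leq_{\sigma}y \Leftrightarrow \sigma^{-1}(x)\leq\sigma^{-1}(y)$ is again a total ordering, and a routine calculation using $F_{\sigma}(x,y)=\sigma(F(\sigma^{-1}(x),\sigma^{-1}(y)))$ shows that $F$ is $\leq$-preserving if and only if $F_{\sigma}$ is $\leq_{\sigma}$-preserving. Since this property only depends on the $\mathfrak{S}$-orbit of $F$, it yields (i) $\Leftrightarrow$ (ii) $\Leftrightarrow$ (iii) at once. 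For the additional equivalence with (v) when $X=X_n$, I use that any total ordering $\leq$ on the finite set $X_n$ is order-isomorphic to $\leq_n$; taking $\sigma\in\mathfrak{S}_n$ to be the unique such isomorphism from $(X_n,\leq)$ to $(X_n,\leq_n)$ turns (i) into (v), while (v) $\Rightarrow$ (ii) is trivial.

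Second, I would show (i) $\Leftrightarrow$ (iv) for any $F\in\mathcal{F}$ by chaining Propositions~\ref{prop:sf7dw} and~\ref{prop:mainFnF}. The implication (iv) $\Rightarrow$ (i) is immediate: Proposition~\ref{prop:mainFnF} produces a total ordering $\leq$ on $X$ for which $\precsim_F$ is single-plateaued, and Proposition~\ref{prop:sf7dw} then gives that $F$ is $\leq$-preserving. For (i) $\Rightarrow$ (iv), Proposition~\ref{prop:sf7dw} again provides a total ordering $\leq$ for which $\precsim_F$ is single-plateaued, and I would deduce $2$-quasilinearity as follows. Assuming for contradiction that there exist pairwise distinct $a,b,c,d\in X$ with $a\prec_F b\sim_F c\sim_F d$, I would WLOG arrange $b<c<d$ under $\leq$ and consider the four possible positions of $a$ in this chain. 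In each case, a suitably chosen triple drawn from $\{a,b,c,d\}$ contradicts the single-plateaued condition; for instance, if $a<b<c<d$ then the triple $a<b<c$ violates it since $a\prec_F b\sim_F c$ and none of $b\prec_F a$, $b\prec_F c$, $a\sim_F b\sim_F c$ holds, while the remaining three positions of $a$ can be handled analogously using the triple containing $a$ together with the two elements of $\{b,c,d\}$ lying on the other side of $a$.

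Finally, the equivalences (iv) $\Leftrightarrow$ (vi) $\Leftrightarrow$ (vii) in the finite case are obtained by unfolding definitions. By definition of the signature, $n_2,\ldots,n_k$ are exactly the sizes of the $\sim_F$-classes that are not $\precsim_F$-minimal, so $2$-quasilinearity of $\precsim_F$ is nothing but (vi). For (vi) $\Leftrightarrow$ (vii), formula~\eqref{eq:sd541} says that the smallest value $c_1$ in $|F^{-1}|$ equals $n_1$ and each larger distinct value occurs exactly $n_i$ times for some $i\in\{2,\ldots,k\}$, so the bound by $2$ in (vii) is precisely the bound by $2$ in (vi). The only step requiring genuine work is the case analysis establishing (i) $\Rightarrow$ (iv); everything else amounts to bookkeeping and an order-isomorphism argument on top of the two preceding propositions.
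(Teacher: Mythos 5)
Your proposal is correct and follows essentially the same route as the paper: (i)$\Leftrightarrow$(ii)$\Leftrightarrow$(iii) and (v) via conjugating the total ordering, (i)$\Leftrightarrow$(iv) by combining Propositions~\ref{prop:sf7dw} and~\ref{prop:mainFnF}, and the finite-case equivalences by reading off the signature from the preimage sequence. The only deviation is that your four-case analysis for (i)$\Rightarrow$(iv) re-proves inline the sufficiency half of Proposition~\ref{prop:mainFnF} (single-plateaued for some total ordering implies $2$-quasilinear), which the paper simply cites; your argument is valid and makes that step self-contained.
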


\begin{proof}
(i) $\Leftrightarrow$ (ii) $\Leftrightarrow$ (iii). These equivalences are straightforward.

(i) $\Leftrightarrow$ (iv). This follows from both Propositions~\ref{prop:sf7dw} and \ref{prop:mainFnF}.

(i) $\Rightarrow$ (v). Let $\leq$ be a total ordering $X_n$ for which $F$ is $\leq$-preserving. Take $\sigma\in\mathfrak{S}_n$ such that
$$
\sigma(x)\leq_n\sigma(y) \quad\Leftrightarrow\quad x\leq y,\qquad x,y\in X_n.
$$
Now let $x,x',y,y'\in X_n$ such that $x\leq_n x'$ and $y\leq_n y'$. We then have $\sigma^{-1}(x)\leq\sigma^{-1}(x')$ and $\sigma^{-1}(y)\leq\sigma^{-1}(y')$. Since $F$ is $\leq$-preserving, we have
$$
F(\sigma^{-1}(x),\sigma^{-1}(y)) ~\leq ~ F(\sigma^{-1}(x'),\sigma^{-1}(y')),
$$
that is, $F_{\sigma}(x,y)\leq_n F_{\sigma}(x',y')$.

(v) $\Rightarrow$ (ii). Trivial.

(iv) $\Leftrightarrow$ (vi). Trivial.

(iv) $\Leftrightarrow$ (vii). This follows from \eqref{eq:of1}.
\end{proof}

\begin{corollary}
Let $c=(c_1,\ldots,c_n)$ be a nondecreasing $n$-sequence. Then there exists an order-preservable operation $F\in\mathcal{F}_n$ such that $|F^{-1}|=c$ if and only if both Eq.~\eqref{eq:s4d3} and assertion (vii) of Proposition~\ref{prop:s76dfv} hold.
\end{corollary}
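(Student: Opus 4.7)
The plan is to derive this corollary by combining Proposition~\ref{prop:s4d3} (which characterizes the preimage sequences of operations in $\mathcal{F}_n$) with the equivalence (i) $\Leftrightarrow$ (vii) in Proposition~\ref{prop:s76dfv} (which characterizes order-preservability purely in terms of $|F^{-1}|$). Since both conditions on the sequence $c$ depend only on the multiset structure of $c$, no new combinatorics is required; the work is entirely bookkeeping.

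For necessity, I would start from an order-preservable $F\in\mathcal{F}_n$ with $|F^{-1}|=c$. Because $F\in\mathcal{F}_n$, Proposition~\ref{prop:s4d3} applied to $F$ immediately yields \eqref{eq:s4d3} for $c$. Because $F$ is order-preservable, the equivalence (i) $\Leftrightarrow$ (vii) of Proposition~\ref{prop:s76dfv} applied to $F$ directly yields assertion (vii) for $c=|F^{-1}|$.

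For sufficiency, I would assume that $c=(c_1,\ldots,c_n)$ satisfies both \eqref{eq:s4d3} and assertion (vii). By the sufficiency direction of Proposition~\ref{prop:s4d3}, there exists $F\in\mathcal{F}_n$ such that $|F^{-1}|=c$. Since $|F^{-1}|=c$ satisfies condition (vii), the equivalence (vii) $\Leftrightarrow$ (i) of Proposition~\ref{prop:s76dfv} shows that this very $F$ is order-preservable, which is what we needed.

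There is essentially no obstacle here: once Proposition~\ref{prop:s4d3} and Proposition~\ref{prop:s76dfv} are in hand, the corollary is a one-line logical conjunction of the two characterizations, because order-preservability is a property that Proposition~\ref{prop:s76dfv} certifies to be an invariant of the preimage sequence. The only point worth flagging explicitly is that in the sufficiency direction one does not need to construct a special order-preservable $F$: \emph{any} $F\in\mathcal{F}_n$ realizing the sequence $c$ automatically works, since (vii) controls order-preservability through $|F^{-1}|$ alone.
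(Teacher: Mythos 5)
Your proposal is correct and follows exactly the paper's route: the paper also derives the corollary as an immediate consequence of Propositions~\ref{prop:s4d3} and \ref{prop:s76dfv}, with your write-up merely making explicit the two directions that the paper leaves implicit. No further comment is needed.
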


\begin{proof}
This result immediately follows from Propositions~\ref{prop:s4d3} and \ref{prop:s76dfv}.
\end{proof}

For any $F,G\in\mathcal{F}$ such that $\mathrm{sgn}(F)=\mathrm{sgn}(G)$, by Proposition~\ref{prop:444} we have ${\precsim_F}\simeq{\precsim_G}$, and hence ${\precsim_F}$ is $2$-quasilinear if and only if so is ${\precsim_G}$. By Proposition~\ref{prop:s76dfv}, it follows that $F$ is order-preservable if and only if so is $G$. In particular, for any $\sigma\in\mathfrak{S}$, we have that $F$ is order-preservable if and only if so is $F_{\sigma}$, as also mentioned in Proposition~\ref{prop:s76dfv}. This observation justifies the following terminology. For any order-preservable operation $F\in\mathcal{F}$, we say that its signature $\mathrm{sgn}(F)$ and orbit $\mathrm{orb}(F)$ are \emph{order-preservable}.

Let us assume for the rest of this section that $X=X_n$ for some integer $n\geq 1$. It is not difficult to see by inspection that all the signatures in $\mathcal{F}_n$ are order-preservable when $n\leq 3$. For $n=4$, only the signature $(1,3)$ is not order-preservable. It consists of two non-order-preservable orbits and corresponds to the preimage sequence $(1,5,5,5)$. Figure~\ref{fig:nop} (left) shows the contour plot of one of the eight non-order-preservable operations in $\mathcal{F}_4$.

\setlength{\unitlength}{3.5ex}
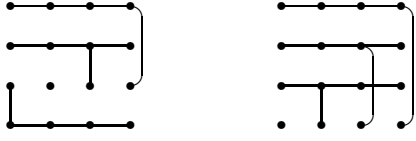
\begin{figure}[htbp]
\begin{center}
\begin{small}
\begin{picture}(4,4)
\multiput(0.5,0.5)(0,1){4}{\multiput(0,0)(1,0){4}{\circle*{0.2}}}
\drawline[1](0.5,3.5)(3.5,3.5)\drawline[1](0.5,2.5)(3.5,2.5)\drawline[1](2.5,2.5)(2.5,1.5)
\drawline[1](0.5,1.5)(0.5,0.5)(3.5,0.5)\put(3.5,2.5){\oval(0.6,2)[r]}
\end{picture}
\hspace{0.1\textwidth}
\begin{picture}(4,4)
\multiput(0.5,0.5)(0,1){4}{\multiput(0,0)(1,0){4}{\circle*{0.2}}}
\drawline[1](0.5,3.5)(3.5,3.5)\drawline[1](0.5,2.5)(3.5,2.5)\drawline[1](1.5,0.5)(1.5,1.5)
\drawline[1](0.5,1.5)(3.5,1.5)\put(2.5,1.5){\oval(0.6,2)[r]}\put(3.5,2){\oval(0.6,3)[r]}
\end{picture}
\end{small}
\caption{A non-order-preservable operation in $\mathcal{F}_4$ (left) and its ordinal sum representation (right)}
\label{fig:nop}
\end{center}
\end{figure}

Let us now consider enumeration problems. We first observe that the number of operations $F\in\mathcal{F}_n$ that are $\leq_n$-preserving was computed in \cite[Prop.~4.11]{CouDevMar2} and is known as Sloane's $\mathrm{A293005}$.

Now, for any integer $n\geq 0$, let $p_{\mathrm{op}}(n)$ be the number of $2$-quasilinear weak orderings on $X_n$ and let $q_{\mathrm{op}}(n)$ be the number of order-preservable operations $F\in\mathcal{F}_n$. Let also $r_{\mathrm{op}}(n)$ be the number of order-preservable orbits in $\mathcal{F}_n$ and let $s_{\mathrm{op}}(n)$ be the number of order-preservable signatures in $\mathcal{F}_n$. By convention, we set $p_{\mathrm{op}}(0)=q_{\mathrm{op}}(0)=r_{\mathrm{op}}(0)=s_{\mathrm{op}}(0)=1$. The next three propositions provide explicit expressions for these sequences. Also, the first few values are given in Table~\ref{tab:pquv}.

\begin{table}[htbp]
$$
\begin{array}{|c|rrrr|}
\hline n & p_{\mathrm{op}}(n) & q_{\mathrm{op}}(n) & r_{\mathrm{op}}(n) & s_{\mathrm{op}}(n) \\
\hline 1 & 1 & 1 & 1 & 1 \\
2 & 3 & 4 & 3 & 2 \\
3 & 13 & 20 & 7 & 4 \\
4 & 71 & 130 & 15 & 7 \\
5 & 486 & 1{\,}052 & 31 & 12 \\
6 & 3{\,}982 & 10{\,}214 & 63 & 20 \\
\hline
\end{array}
$$
\caption{First few values of $p_{\mathrm{op}}(n)$, $q_{\mathrm{op}}(n)$, $r_{\mathrm{op}}(n)$, and $s_{\mathrm{op}}(n)$}
\label{tab:pquv}
\end{table}

\begin{proposition}\label{prop:prn}
The sequence $(p_{\mathrm{op}}(n))_{n\geq 0}$ satisfies the second order linear recurrence equation
$$
p_{\mathrm{op}}(n+2) ~=~ 1+(n+2){\,}p_{\mathrm{op}}(n+1)+\frac{1}{2}(n+2)(n+1){\,}p_{\mathrm{op}}(n),\qquad n\geq 1,
$$
with $p_{\mathrm{op}}(1)=1$ and $p_{\mathrm{op}}(2)=3$, and we have
$$
p_{\mathrm{op}}(n) ~=~ \sum_{k=0}^n\frac{n!}{(n+1-k)!}{\,}G_k{\,},\qquad n\geq 1,
$$
where $G_n=\frac{\sqrt{3}}{3}\,(\frac{1+\sqrt{3}}{2})^n-\frac{\sqrt{3}}{3}\,(\frac{1-\sqrt{3}}{2})^n$. Moreover, its exponential generating function is $\widehat{P}_{\mathrm{op}}(z)=(2e^z-2z-z^2)/(2-2z-z^2)$.
\end{proposition}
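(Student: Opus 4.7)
The plan is to enumerate $2$-quasilinear weak orderings on $X_n$ by conditioning on the minimum class. Any such weak ordering is an ordered partition $(C_1,\ldots,C_k)$ of $X_n$ in which $|C_1|$ is arbitrary and $|C_i|\in\{1,2\}$ for $i\geq 2$. Fixing the size $n_1$ and the labels of $C_1$ yields
$$
p_{\mathrm{op}}(n) ~=~ \sum_{n_1=1}^n \binom{n}{n_1}\, a(n-n_1), \qquad n\geq 1,
$$
where $a(m)$ counts the ordered partitions of a set of $m$ labeled elements into blocks of size $1$ or $2$, with $a(0)=1$. Splitting on the size of the topmost block gives the auxiliary recurrence
$$
a(m) ~=~ m\, a(m-1) + \binom{m}{2}\, a(m-2), \qquad m\geq 2,
$$
with $a(0)=a(1)=1$.

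Next I would pass to exponential generating functions. Dividing the auxiliary recurrence by $m!$ shows that $\hat{a}(m)=a(m)/m!$ satisfies $\hat{a}(m)=\hat{a}(m-1)+\tfrac{1}{2}\hat{a}(m-2)$, so its generating function is $\widehat{A}(z) = 2/(2-2z-z^2)$. Since the summation above expresses $p_{\mathrm{op}}(n)$ ($n\geq 1$) as an EGF convolution of $e^z-1$ (for a nonempty minimum class) with $\widehat{A}(z)$, the convention $p_{\mathrm{op}}(0)=1$ gives
$$
\widehat{P}_{\mathrm{op}}(z) ~=~ 1 + (e^z-1)\, \widehat{A}(z) ~=~ \frac{2e^z-2z-z^2}{2-2z-z^2}.
$$
Clearing denominators and reading off the coefficient of $z^{n+2}/(n+2)!$ for $n\geq 1$ produces the claimed second-order recurrence, the additive constant $1$ coming from the contribution of $2e^z$; the initial values $p_{\mathrm{op}}(1)=1$ and $p_{\mathrm{op}}(2)=3$ are checked by direct inspection.

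For the closed form I would solve the characteristic equation $2x^2-2x-1=0$ of the auxiliary recurrence, whose roots are $\phi_\pm=(1\pm\sqrt{3})/2$. Fitting $\hat{a}(0)=\hat{a}(1)=1$ yields $\hat{a}(m)=(\phi_+^{m+1}-\phi_-^{m+1})/\sqrt{3}=G_{m+1}$, hence $a(m)=m!\,G_{m+1}$. Substituting into the summation formula and reindexing $k=m+1$ (using $G_0=0$ to extend the sum to $k=0$) gives the stated closed form. The main care point is reconciling the convention $p_{\mathrm{op}}(0)=1$ with the combinatorial sum (which naturally returns $0$ at $n=0$); this accounts for the extra ``$1+$'' when forming $\widehat{P}_{\mathrm{op}}$.
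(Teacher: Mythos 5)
Your argument is correct, and it takes a genuinely different route from the paper. The paper peels off the \emph{maximal} class of the weak ordering, whose size is forced to lie in $\{1,2,n\}$ by $2$-quasilinearity; this yields the recurrence $p_{\mathrm{op}}(n)=1+n\,p_{\mathrm{op}}(n-1)+\frac12 n(n-1)\,p_{\mathrm{op}}(n-2)$ directly and combinatorially, after which the EGF is read off and the closed form is obtained by solving the recurrence (variation of parameters). You instead peel off the \emph{minimal} class, whose size is unconstrained, which expresses $p_{\mathrm{op}}(n)$ as the binomial convolution $\sum_{n_1\geq 1}\binom{n}{n_1}a(n-n_1)$ with the auxiliary sequence $a(m)$ of ordered partitions into blocks of size at most $2$; the EGF then factors as $1+(e^z-1)\widehat{A}(z)$, the recurrence is recovered analytically by clearing denominators, and the closed form $p_{\mathrm{op}}(n)=\sum_k \frac{n!}{(n+1-k)!}G_k$ drops out of the convolution once you identify $a(m)=m!\,G_{m+1}$. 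The two decompositions are dual in effect: the paper's makes the recurrence immediate and the closed form a computation, yours makes the closed form immediate (and gives it a transparent combinatorial meaning, with $G_{k}$ counting the normalized partitions above the bottom class) at the cost of deriving the recurrence from the generating function. Your handling of the $n=0$ convention and the verification that the convolution identity is a genuine bijection (the minimal class is uniquely determined) are both sound, so there is no gap.
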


\begin{proof}
We clearly have $p_{\mathrm{op}}(1)=1$ and $p_{\mathrm{op}}(2)=3$. Now, let $n\geq 3$, let ${\precsim}$ be a $2$-quasilinear weak ordering on $X_n$, and let $m$ be the number of maximal elements of $X_n$ for $\precsim$. By definition of $2$-quasilinearity, we necessarily have $m\in\{1,2,n\}$. Moreover, the restriction of $\precsim$ to the $(n-m)$-element set obtained from $X_n$ by removing its maximal elements for $\precsim$ is $2$-quasilinear. It follows that the sequence $p_{\mathrm{op}}(n)$ satisfies the second order linear recurrence equation
$$
p_{\mathrm{op}}(n) ~=~ 1+n{\,}p_{\mathrm{op}}(n-1)+\frac{1}{2}{\,}n(n-1){\,}p_{\mathrm{op}}(n-2),\qquad n\geq 3,
$$
as claimed. Thus, the sequence $(a(n))_{n\geq 0}$ defined by $a(n)=p_{\mathrm{op}}(n)/n!$ for every $n\geq 0$ satisfies the second order linear recurrence equation (with constant coefficients)
$$
a(n) ~=~ \frac{1}{n!}+a(n-1)+\frac{1}{2}{\,}a(n-2),\qquad n\geq 3.
$$
The expression for the exponential generating function of $(p_{\mathrm{op}}(n))_{n\geq 0}$ (which is exactly the ordinary generating function of $(a(n))_{n\geq 0}$) follows straightforwardly. The claimed closed form for $p_{\mathrm{op}}(n)$ is then obtained by solving the latter recurrence equation (using the method of variation of parameters).
\end{proof}

\begin{proposition}\label{prop:FibLuc}
The sequence $(q_{\mathrm{op}}(n))_{n\geq 0}$ satisfies the second order linear recurrence equation
$$
q_{\mathrm{op}}(n+2) ~=~ 2+(n+2){\,}q_{\mathrm{op}}(n+1)+(n+2)(n+1){\,}q_{\mathrm{op}}(n),\qquad n\geq 1,
$$
with $q_{\mathrm{op}}(1)=1$ and $q_{\mathrm{op}}(2)=4$, and we have
$$
q_{\mathrm{op}}(n) ~=~ n!{\,}F_n+2\,\sum_{k=0}^{n-1}\frac{n!}{(n+1-k)!}{\,}F_k{\,},\qquad n\geq 1,
$$
where $F_n=\frac{\sqrt{5}}{5}\,(\frac{1+\sqrt{5}}{2})^n-\frac{\sqrt{5}}{5}\,(\frac{1-\sqrt{5}}{2})^n$ is the $n$th Fibonacci number. Moreover, its exponential generating function is $\widehat{Q}_{\mathrm{op}}(z)=(2e^z-1-2z-z^2)/(1-z-z^2)$.
\end{proposition}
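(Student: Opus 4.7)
My plan is to imitate the proof of Proposition~\ref{prop:prn}, with the extra bookkeeping that on each $\sim_F$-class of size at least $2$ one must choose between the projections $\pi_1$ and $\pi_2$. The base cases are immediate: on $X_1$ there is only one operation, and on $X_2$ the four order-preservable operations are $\pi_1$, $\pi_2$, $\max_{\leq_2}$, $\min_{\leq_2}$, giving $q_{\mathrm{op}}(1)=1$ and $q_{\mathrm{op}}(2)=4$. Fix $n\geq 3$, take an order-preservable $F\in\mathcal{F}_n$, and let $m$ be the size of the top $\sim_F$-class $C_k$; by clause (vi) of Proposition~\ref{prop:s76dfv}, $\precsim_F$ is $2$-quasilinear, so $m\in\{1,2,n\}$, exactly as in the proof of Proposition~\ref{prop:prn}. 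The restriction of $F$ to $(X_n\setminus C_k)^2$ is again order-preservable (its weak ordering is the truncation of $\precsim_F$), and Theorem~\ref{thm:kimura} recovers $F$ uniquely from this restriction together with the behavior on $C_k$. The three cases contribute respectively $2$ operations (when $m=n$), $n\,q_{\mathrm{op}}(n-1)$ (when $m=1$), and $\binom{n}{2}\cdot 2\cdot q_{\mathrm{op}}(n-2)=n(n-1)\,q_{\mathrm{op}}(n-2)$ (when $m=2$, the extra $2$ accounting for the projection choice on the top pair). Summing yields $q_{\mathrm{op}}(n)=2+n\,q_{\mathrm{op}}(n-1)+n(n-1)\,q_{\mathrm{op}}(n-2)$ for $n\geq 3$, which is the asserted recurrence after the shift $n\mapsto n+2$.

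Next I would convert the recurrence into the functional equation for $\widehat{Q}_{\mathrm{op}}$. Setting $a_n=q_{\mathrm{op}}(n)/n!$, the recurrence becomes $a_n=2/n!+a_{n-1}+a_{n-2}$ for $n\geq 3$ (since the factors $n$ and $n(n-1)$ are exactly absorbed by dividing by $n!$). Multiplying by $z^n$, summing from $n=3$, and plugging in $a_0=1$, $a_1=1$, $a_2=2$ collapses the three resulting partial sums into
\begin{equation*}
(1-z-z^2)\,\widehat{Q}_{\mathrm{op}}(z) ~=~ 2e^z-1-2z-z^2,
\end{equation*}
which gives the stated EGF. Observe that the denominator $1-z-z^2$ is precisely the ordinary generating function of the Fibonacci sequence — this is where the Fibonacci numbers enter.

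For the closed-form expression I would use the identity $1/(1-z-z^2)=\sum_{n\geq 0}F_{n+1}z^n$. Forming the Cauchy product of $2e^z$ with this series and subtracting $(1+2z+z^2)\sum_{n\geq 0}F_{n+1}z^n$ gives, for $n\geq 2$,
\begin{equation*}
a_n ~=~ 2\sum_{k=0}^n\frac{F_{n-k+1}}{k!}-F_{n+1}-2F_n-F_{n-1}.
\end{equation*}
Re-indexing $j=n-k+1$ turns the sum into $2\sum_{j=1}^{n+1}F_j/(n+1-j)!$, and since $F_0=0$ the index $j=0$ can be included freely. Splitting off the terms $j=n$ and $j=n+1$, applying the Fibonacci recurrence $F_{n+1}=F_n+F_{n-1}$ to the polynomial correction, and simplifying yields $a_n=F_n+2\sum_{k=0}^{n-1}F_k/(n+1-k)!$; multiplying by $n!$ gives the claimed formula.

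The combinatorial argument essentially transcribes that of Proposition~\ref{prop:prn}, and the generating-function step is the standard translation of a linear recurrence with polynomial coefficients. The only substantive calculation is the final re-indexing — one has to track carefully which Fibonacci terms are absorbed into the leading $n!\,F_n$ when the polynomial correction is expanded — but this is arithmetic rather than a genuine obstacle.
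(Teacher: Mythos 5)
Your proof is correct and follows essentially the same route as the paper, which simply notes that the argument of Proposition~\ref{prop:prn} carries over: you classify by the size $m\in\{1,2,n\}$ of the top $\sim_F$-class, insert the extra factor of $2$ for the projection choice on a non-singleton top class, obtain the recurrence $q_{\mathrm{op}}(n)=2+n\,q_{\mathrm{op}}(n-1)+n(n-1)\,q_{\mathrm{op}}(n-2)$, and then pass to $a_n=q_{\mathrm{op}}(n)/n!$ to read off the exponential generating function and the Fibonacci convolution. The numerical checks ($q_{\mathrm{op}}(3)=20$, $q_{\mathrm{op}}(4)=130$, etc.) and your re-indexing to reach $n!\,F_n+2\sum_{k=0}^{n-1}\frac{n!}{(n+1-k)!}F_k$ all come out right.
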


\begin{proof}
The proof is similar to that of Proposition~\ref{prop:prn}.
\end{proof}

\begin{proposition}
We have $r_{\mathrm{op}}(n)=2^n-1$ and $s_{\mathrm{op}}(n)=F_{n+2}-1$ for every $n\geq 1$, where $F_n$ is the $n$th Fibonacci number.
\end{proposition}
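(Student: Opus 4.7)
The plan is to treat the two identities separately, in each case by classifying order-preservable signatures (and orbits) according to their top block.

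For $s_{\mathrm{op}}(n)$, condition (vi) of Proposition~\ref{prop:s76dfv} describes order-preservable signatures as the compositions $(n_1,n_2,\ldots,n_k)$ of $n$ with $n_1\geq 1$ arbitrary and $n_2,\ldots,n_k\in\{1,2\}$. Classifying by the value $n_1=m\in\{1,\ldots,n\}$, the number of admissible completions $(n_2,\ldots,n_k)$ is the number of compositions of $n-m$ into parts of size $1$ or $2$, which is the Fibonacci number $F_{n-m+1}$. Summing over $m$ then yields
$$s_{\mathrm{op}}(n) ~=~ \sum_{m=1}^{n} F_{n-m+1} ~=~ \sum_{j=1}^{n} F_j ~=~ F_{n+2}-1,$$
by the classical Fibonacci identity $\sum_{j=1}^{n}F_j=F_{n+2}-1$.

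For $r_{\mathrm{op}}(n)$, Lemma~\ref{lemma:leqn} identifies orbits in $\mathcal{F}_n$ with ordinal sums of projections on $(X_n,\leq_n)$, so an order-preservable orbit is exactly the data of an order-preservable signature $(n_1,\ldots,n_k)$ together with a projection choice in $\{1,2\}$ for each block $C_i$ with $|C_i|\geq 2$. I would then partition these objects according to the top block $C_k$: either $k=1$ (signature $(n)$, contributing $2$ when $n\geq 2$), or $k\geq 2$ with $|C_k|=1$ (removing $C_k$ bijects with the order-preservable orbits on $X_{n-1}$, contributing $r_{\mathrm{op}}(n-1)$), or $k\geq 2$ with $|C_k|=2$ (bijecting with pairs of an order-preservable orbit on $X_{n-2}$ and a projection choice on $C_k$, contributing $2\,r_{\mathrm{op}}(n-2)$). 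This gives the recurrence
$$r_{\mathrm{op}}(n) ~=~ 2 + r_{\mathrm{op}}(n-1) + 2\,r_{\mathrm{op}}(n-2),\qquad n\geq 3,$$
with base cases $r_{\mathrm{op}}(1)=1$ and $r_{\mathrm{op}}(2)=3$ verified directly. A straightforward induction then gives
$$r_{\mathrm{op}}(n) ~=~ 2 + (2^{n-1}-1) + 2(2^{n-2}-1) ~=~ 2^n-1.$$

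Neither argument hides a genuine obstacle; the only point requiring care is that removing the top block really defines a bijection onto the (weighted) set of order-preservable ordinal sums on the smaller totally ordered set. This is transparent from condition (vi): the constraint $n_2,\ldots,n_k\in\{1,2\}$ is automatically inherited by the truncated signature $(n_1,\ldots,n_{k-1})$, and the optional projection choice on $C_k$ supplies exactly the factor $2$ appearing in the third case.
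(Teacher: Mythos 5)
Your proof is correct and follows essentially the same route as the paper: the recurrence $r_{\mathrm{op}}(n)=2+r_{\mathrm{op}}(n-1)+2\,r_{\mathrm{op}}(n-2)$ obtained by peeling off the top block (of size $1$, $2$, or $n$) is exactly the paper's argument. For $s_{\mathrm{op}}$ the paper derives the analogous recurrence $s_{\mathrm{op}}(n)=1+s_{\mathrm{op}}(n-1)+s_{\mathrm{op}}(n-2)$ from the same classification, whereas you instead sum the Fibonacci counts of compositions of $n-n_1$ into parts $1$ and $2$ directly; the two computations are interchangeable.
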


\begin{proof}
We clearly have $r_{\mathrm{op}}(1)=1$ and $r_{\mathrm{op}}(2)=3$. To compute $r_{\mathrm{op}}(n)$ for $n\geq 3$, we proceed exactly as in the proof of Proposition~\ref{prop:d6fssf}, except that here we have $n_k\in\{1,2,n\}$. It follows that the sequence $r_{\mathrm{op}}(n)$ satisfies the second order linear recurrence equation
$$
r_{\mathrm{op}}(n) ~=~ 2+r_{\mathrm{op}}(n-1)+2{\,}r_{\mathrm{op}}(n-2),\qquad n\geq 3.
$$
The explicit expression for $r_{\mathrm{op}}(n)$ then follows immediately.

Let us now consider the sequence $s_{\mathrm{op}}(n)$. We clearly have $s_{\mathrm{op}}(1)=1$ and $s_{\mathrm{op}}(2)=2$. Let $n\geq 3$. We know by Proposition~\ref{prop:444} that $s_{\mathrm{op}}(n)$ is also the number of $2$-quasilinear weak orderings on $X_n$ that are defined up to an isomorphism. Thus, proceeding as in the proof of Proposition~\ref{prop:prn}, we see that the sequence $s_{\mathrm{op}}(n)$ satisfies the second order linear recurrence equation
$$
s_{\mathrm{op}}(n) ~=~ 1+s_{\mathrm{op}}(n-1)+s_{\mathrm{op}}(n-2),\qquad n\geq 3.
$$
The explicit expression for $s_{\mathrm{op}}(n)$ then follows immediately.
\end{proof}

\section{Commutative, anticommutative, and bisymmetric operations}

Recall that an operation $F\colon X^2\to X$ is said to be
\begin{itemize}
\item \emph{commutative} if $F(x,y)=F(y,x)$ for all $x,y\in X$;
\item \emph{anticommutative} if, for any $x,y\in X$, the equality $F(x,y)=F(y,x)$ implies $x=y$;
\item \emph{bisymmetric} (or \emph{medial}) if $F(F(x,y),F(u,v))=F(F(x,u),F(y,v))$ for all $x,y,u,v\in X$.
\end{itemize}

In this final section, we investigate the subclasses of $\mathcal{F}$ defined by each of the three properties above. As far as commutative or anticommutative operations are concerned, we have the following two propositions.

\begin{proposition}
Let $F\colon X^2\to X$ be an operation. The following assertions are equivalent.
\begin{enumerate}
\item[(i)] $F\in\mathcal{F}$ and is commutative.
\item[(ii)] $F$ is quasitrivial, order-preservable, and commutative.
\item[(iii)] $F=\max_{\preceq}$ for some total ordering $\preceq$ on $X$.
\end{enumerate}
If $X=X_n$ for some integer $n\geq 1$, then any of the assertions (i)--(iii) above is equivalent to any of the following ones.
\begin{enumerate}
\item[(iv)] $F\in\mathcal{F}_n$ and $|\mathrm{orb}(F)|=n!$.
\item[(v)] $F\in\mathcal{F}_n$ and $\mathrm{sgn}(F)=(1,\ldots,1)$.
\item[(vi)] $F$ is quasitrivial and satisfies $|F^{-1}|=(1,3,5,\ldots,2n-1)$.
\item[(vii)] $F$ is associative, idempotent (i.e., $F(x,x)=x$ for all $x\in X_n$), order-preservable, commutative, and has a neutral element.
\end{enumerate}
\end{proposition}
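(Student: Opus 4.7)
The plan is first to establish the equivalence of (i), (ii), and (iii) for an arbitrary $X$, and then to reduce each of the finite-case assertions (iv)--(vii) to (iii). The implications (iii)~$\Rightarrow$~(i) and (iii)~$\Rightarrow$~(ii) are immediate since $\max_{\preceq}$ is manifestly associative, quasitrivial, commutative, and $\preceq$-preserving. For (i)~$\Rightarrow$~(iii), I will apply Theorem~\ref{thm:kimura}: commutativity of $F$ forces $\pi_1|_{A\times A} = \pi_2|_{A\times A}$ on every $\sim_F$-class $A$, whence $|A| = 1$, the weak ordering $\precsim_F$ is actually a total ordering, and $F = \max_{\precsim_F}$. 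The first main obstacle is (ii)~$\Rightarrow$~(iii): I will define a relation $\preceq$ on $X$ by letting $x \prec y$ mean $F(x,y) = y$; quasitriviality and commutativity immediately yield totality and antisymmetry. Transitivity is the crux, which I will establish by contradiction: a hypothetical 3-cycle $x \prec y \prec z \prec x$ must violate $\leq$-preservation for the order $\leq$ witnessing order-preservability. By cyclic relabeling I may assume $x$ is the $\leq$-smallest of $\{x,y,z\}$, leaving the two subcases $x < y < z$ and $x < z < y$; in each, an explicit componentwise inequality (for instance $(x,y) \leq (x,z)$ in the first subcase, which would force $y \leq x$) produces the required contradiction.

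With (i)--(iii) in hand, I turn to the finite case. Proposition~\ref{prop:staborb} yields (iv)~$\Leftrightarrow$~(v) at once, since $\binom{n}{n_1,\ldots,n_k} = n!$ forces every $n_i$ to equal $1$. The equivalence (v)~$\Leftrightarrow$~(iii) is immediate: $\mathrm{sgn}(F) = (1,\ldots,1)$ exactly means $\precsim_F$ is total, and Theorem~\ref{thm:kimura} delivers $F = \max_{\precsim_F}$. Equation~\eqref{eq:sd541} gives (v)~$\Rightarrow$~(vi) directly. For (vi)~$\Rightarrow$~(iii), I will proceed by induction on $n$: if $F$ is merely quasitrivial with $|F^{-1}| = (1,3,\ldots,2n-1)$, then the unique $z \in X_n$ with $|F^{-1}[z]| = 2n-1$ must satisfy $F(z,y) = F(y,z) = z$ for all $y$, since $F^{-1}[z] \subseteq (\{z\}\times X_n) \cup (X_n \times \{z\})$ and the latter has cardinality exactly $2n-1$; restricting $F$ to $X_n \setminus \{z\}$ then yields a quasitrivial operation with preimage sequence $(1,3,\ldots,2n-3)$ to which the induction hypothesis applies.

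The final equivalence (iii)~$\Leftrightarrow$~(vii) is the second main obstacle. The direction (iii)~$\Rightarrow$~(vii) is immediate since $\max_{\preceq}$ has neutral element $\min_{\preceq}(X_n)$. For (vii)~$\Rightarrow$~(iii), associativity, commutativity, and idempotence make $(X_n, F)$ a semilattice with semilattice order $\leq_F$ defined by $x \leq_F y \Leftrightarrow F(x,y) = y$ and with bottom the neutral element $e$; I will show that order-preservability forces $\leq_F$ to be a chain, whereupon $F$ is quasitrivial and the result follows from (i)~$\Rightarrow$~(iii). Suppose for contradiction that $\leq_F$-incomparable $a, b$ exist and set $c = F(a,b)$, so that $a, b, c, e$ are pairwise distinct. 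Fix a total order $\leq$ making $F$ $\leq$-preserving, with $a \leq b$ after relabeling. The componentwise chain $(a,a) \leq (a,b) \leq (b,b)$ forces $a \leq c \leq b$ in $\leq$. A short case analysis on the position of $e$ in $\leq$ (the four cases $e \leq a$; $b \leq e$; $a \leq e \leq b$ with $c \leq e$; $a \leq e \leq b$ with $e \leq c$) combines the identity $F(e,\cdot) = \mathrm{id}$ with further componentwise comparisons to force either $b \leq c$ or $c \leq a$, each contradicting the pairwise distinctness. Hence $\leq_F$ is a chain, and (iii) follows.
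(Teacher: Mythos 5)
Your proposal is correct, but it takes a genuinely different route from the paper: the paper disposes of the equivalence among (i), (ii), (iii), (vi), and (vii) in one line by citing \cite[Theorem~3.3]{CouDevMar2}, and only argues (iv)~$\Leftrightarrow$~(v) (via Proposition~\ref{prop:staborb}) and (iii)~$\Leftrightarrow$~(v) directly, whereas you reprove the cited equivalences from first principles. Your self-contained arguments check out: in (ii)~$\Rightarrow$~(iii) the relation $x\preceq y \Leftrightarrow F(x,y)=y$ is indeed total and antisymmetric by quasitriviality and commutativity, and the two subcases of a putative $3$-cycle do yield contradictions (e.g.\ $(x,y)\leq(z,y)$ forces $y\leq z$ when $x<z<y$); in (vi)~$\Rightarrow$~(iii) the counting bound $|F^{-1}[z]|\leq 2n-1$ for a quasitrivial operation is exactly what makes the top element split off, and the restriction argument is clean; and in (vii)~$\Rightarrow$~(iii) the semilattice viewpoint with bottom $e$ works, with the four positions of $e$ each producing a monotone comparison (such as $(e,b)\leq(a,b)$ when $e\leq a$, or $(c,a)\leq(e,a)$ when $b\leq e$) that contradicts $a<c<b$. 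What the paper's approach buys is brevity and consistency with its companion paper; what yours buys is a proof readable without \cite{CouDevMar2}, at the cost of the two nontrivial case analyses. Two small points worth making explicit if you write this up: in (ii)~$\Rightarrow$~(iii) you should note that associativity of $F$ is \emph{not} assumed and is obtained for free from $F=\max_{\preceq}$; and in (vii)~$\Rightarrow$~(iii) you should record why $a,b,c,e$ are pairwise distinct (in particular $c=F(a,b)$ differs from $a$ and $b$ precisely because $a$ and $b$ are $\leq_F$-incomparable, and $e$ differs from all three because it is the bottom of $\leq_F$).
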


\begin{proof}
The equivalence among (i), (ii), (iii), (vi), and (vii) was proved in \cite[Theorem~3.3]{CouDevMar2}. The equivalence (iv) $\Leftrightarrow$ (v) immediately follows from Proposition~\ref{prop:staborb}. The equivalence (iii) $\Leftrightarrow$ (v) is trivial.
\end{proof}

\begin{proposition}
Let $F\colon X^2\to X$ be an operation. The following assertions are equivalent.
\begin{enumerate}
\item[(i)] $F\in\mathcal{F}$ and is anticommutative.
\item[(ii)] $F$ is quasitrivial, order-preservable, and anticommutative.
\item[(iii)] $F=\pi_1$ or $F=\pi_2$.
\end{enumerate}
If $X=X_n$ for some integer $n\geq 1$, then any of the assertions (i)--(iii) above is equivalent to any of the following ones.
\begin{enumerate}
\item[(iv)] $F\in\mathcal{F}_n$ and $|\mathrm{orb}(F)|=1$.
\item[(v)] $F\in\mathcal{F}_n$ and $\mathrm{sgn}(F)=(n)$.
\item[(vi)] $F$ is quasitrivial, order-preservable, and satisfies $|F^{-1}|=(n,\ldots,n)$.
\end{enumerate}
\end{proposition}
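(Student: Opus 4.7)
The plan is to route all assertions through (iii). The implications (iii)$\Rightarrow$(i) and (iii)$\Rightarrow$(ii) are immediate, since $\pi_1$ and $\pi_2$ are associative, quasitrivial, anticommutative, and $\leq$-preserving for every total ordering $\leq$ on $X$. For (i)$\Rightarrow$(iii), I would invoke Theorem~\ref{thm:kimura}: if $\precsim_F$ had two distinct equivalence classes $A\prec_F B$, then for any $x\in A$ and $y\in B$ we would have $F(x,y)=F(y,x)=y$ with $x\neq y$, contradicting anticommutativity. Hence $X/{\sim_F}$ is a singleton, and Theorem~\ref{thm:kimura} then forces $F=\pi_1$ or $F=\pi_2$.

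The critical step in the general case is (ii)$\Rightarrow$(iii). Quasitriviality combined with anticommutativity forces $\{F(a,b),F(b,a)\}=\{a,b\}$ for all $a\neq b$; for $a<b$ I call $(a,b)$ a \emph{min-pair} if $F(a,b)=a$ and a \emph{max-pair} otherwise. Suppose for contradiction that pairs of both types occur. A case analysis based on whether the two witnessing pairs share a coordinate yields a contradiction in every configuration. For instance, if $(m,y)$ is a min-pair and $(m,v)$ is a max-pair with $y<v$, then $\leq$-preservability applied to $y\leq v$ in the first argument gives $F(y,m)=y\leq F(v,m)=m$, contradicting $m<y$. Each of the remaining shared-coordinate configurations is handled by an entirely analogous one-line argument, and the disjoint case reduces to a shared-coordinate case by inspecting the type of a suitably chosen intermediate pair. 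Consequently every ordered pair has the same type and $F=\pi_1$ or $F=\pi_2$.

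For the finite equivalences, (iii)$\Leftrightarrow$(v) is immediate from Theorem~\ref{thm:kimura}, since $\mathrm{sgn}(F)=(n)$ is equivalent to $\precsim_F$ having a unique equivalence class; (iv)$\Leftrightarrow$(v) is a direct consequence of Proposition~\ref{prop:staborb}, as $\binom{n}{n_1,\ldots,n_k}=1$ iff $k=1$ and $n_1=n$; and (iii)$\Rightarrow$(vi) follows from formula~\eqref{eq:sd541} together with the observation that the signature $(n)$ is trivially $2$-quasilinear (cf.\ Proposition~\ref{prop:s76dfv}).

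The main obstacle is the direction (vi)$\Rightarrow$(iii), since (vi) does not \emph{a priori} include associativity. I would reduce it to (ii)$\Rightarrow$(iii) by showing that $F$ is automatically anticommutative. Declare $a$ to \emph{dominate} $b$ if $a\neq b$ and $F(a,b)=F(b,a)=a$; a case analysis using order-preservability (analogous to the one for (ii)$\Rightarrow$(iii)) shows that this relation is transitive, hence a strict partial order on $X_n$. A direct preimage count then gives $|F^{-1}[x]|=n+O(x)-I(x)$, where $O(x)$ and $I(x)$ denote the out- and in-degrees of $x$ in the domination digraph, so the assumption $|F^{-1}|=(n,\ldots,n)$ forces $O(x)=I(x)$ for every $x$. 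Since the digraph is a finite DAG, iteratively peeling off its sinks (which must be isolated vertices by the degree equality) shows that it has no edges; that is, $F$ is anticommutative, and (ii)$\Rightarrow$(iii) completes the proof.
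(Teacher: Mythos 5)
Your proof is correct, and for the two nontrivial directions it is organized somewhat differently from the paper's. For (ii)$\Rightarrow$(iii) the paper fixes one pair $x<y$ on which $F$ restricts to, say, $\pi_1$, and propagates this to every other pair by sandwiching $F(x,z)$ between diagonal values and $F(x,y)$; your classification of all pairs into min- and max-types together with the shared-coordinate/disjoint case analysis is the same idea in different packaging, and the cases you leave implicit do all close. The genuine divergence is in (vi)$\Rightarrow$(iii): the paper conjugates $F$ to a $\leq_n$-preserving operation and then simply cites \cite[Prop.~3.11]{Dev} for the fact that a quasitrivial, $\leq_n$-preserving operation with $|F^{-1}|=(n,\ldots,n)$ must be a projection, whereas you give a self-contained argument via the domination digraph. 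Your degree count $|F^{-1}[x]|=n+O(x)-I(x)$ is correct, and the sink-peeling argument is fine once acyclicity is known. The one place that deserves more care than your sketch suggests is the transitivity of domination: if $a$ dominates $b$ and $b$ dominates $c$, then in two of the six relative $\leq$-orderings of $a,b,c$ (namely $b<a<c$ and $c<a<b$) order-preservability does not directly yield $F(a,c)=F(c,a)=a$; instead one must notice that these orderings are themselves incompatible with $\leq$-preservability of the row of $b$ (e.g.\ $F(b,a)=a$ and $F(b,c)=b$ with $a<c$ would force $a\leq b$). With that observation all six cases close, the relation is a strict partial order, and your argument is complete; its advantage over the paper's is that it makes the proposition self-contained where the paper relies on an external result, at the cost of an extra case analysis.
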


\begin{proof}
(i) $\Leftrightarrow$ (iii). This follows from Theorem~\ref{thm:kimura}.

(iii) $\Rightarrow$ ((ii) \& (vi)). Trivial.

(ii) $\Rightarrow$ (iii). Let $\leq$ be a total ordering on $X$ for which $F$ is $\leq$-preserving. Let $x,y\in X$ such that $x<y$. Since $F$ is quasitrivial and anticommutative, it follows that $F|_{\{x,y\}^2}=\pi_1|_{\{x,y\}^2}$ or $F|_{\{x,y\}^2}=\pi_2|_{\{x,y\}^2}$. Suppose that $F|_{\{x,y\}^2}=\pi_1|_{\{x,y\}^2}$ (the other case is similar). Let $z\in X\setminus\{x,y\}$ and let us show that $F|_{\{x,z\}^2}=\pi_1|_{\{x,z\}^2}$. We then have the following discussion of cases.
\begin{itemize}
\item If $x<z<y$, then $x=F(x,x)\leq F(x,z)\leq F(x,y)=x$. We then have $F(x,z)=x$ and hence $F(z,x)=z$.
\item If $y<z$, then $y=F(y,x)\leq F(z,x)\in\{x,z\}$. We then have $F(z,x)=z$ and hence $F(x,z)=x$.
\item The case $z<x$ is similar to the previous one.
\end{itemize}
Therefore, we have $F|_{\{x,z\}^2}=\pi_1|_{\{x,z\}^2}$. Similarly, we can show that $F|_{\{u,v\}^2}=\pi_1|_{\{u,v\}^2}$ for any $u,v\in X$.

(vi) $\Rightarrow$ (iii). By Proposition~\ref{prop:s76dfv}, there exists $\sigma\in\mathfrak{S}_n$ such that $F_{\sigma}$ is $\leq_n$-preserving. Clearly, $F_{\sigma}$ is quasitrivial. Also, by Proposition~\ref{prop:FG1C} (using $\sigma$ to define the graph isomorphism) we have that $|F_{\sigma}^{-1}|=(n,\ldots,n)$. Now, it was proved in \cite[Prop.~3.11]{Dev} that condition (iii) is equivalent to saying that $F$ is quasitrivial, $\leq_n$-preserving, and satisfies $|F^{-1}|=(n,\ldots,n)$. This completes the proof.

(iii) $\Leftrightarrow$ (iv) $\Leftrightarrow$ (v). This follows from Proposition~\ref{prop:staborb}.
\end{proof}

Let us now investigate those operations in $\mathcal{F}$ that are bisymmetric. Recall first that any bisymmetric and quasitrivial operation is also associative and hence it is an element of $\mathcal{F}$ (see, e.g., \cite[Cor.\ 10.3]{Kep81}).

A weak ordering $\precsim$ on $X$ is said to be \emph{quasilinear} if there are no pairwise distinct $a,b,c\in X$ such that $a\prec b\sim c$ (see \cite[Def.\ 3.1]{Dev}). That is, a weak ordering $\precsim$ on $X$ is quasilinear if and only if every set $C\in X/{\sim}$ that is not minimal for $\precsim$ contains exactly one element of $X$. Clearly, such a weak ordering is also $2$-quasilinear. We also have the following proposition, which is the counterpart of Proposition~\ref{prop:mainFnF} for quasilinear weak orderings.

\begin{proposition}\label{prop:qlsp5}
A weak ordering on $X$ is quasilinear if and only if it is single-plateaued for any total ordering on $X$ that extends it.
\end{proposition}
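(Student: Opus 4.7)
The plan is to prove both directions directly, handling the sufficiency by contraposition.

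For the necessity, assume $\precsim$ is quasilinear and fix a total ordering $\leq$ extending $\precsim$. Given any triple $a<b<c$ in $X$, the contrapositive of the extension condition forces $a \precsim b$ and $b \precsim c$. Since $a \precsim b$ already rules out the disjunct $b \prec a$, the single-plateaued condition reduces to showing $b \prec c$ or $a \sim b \sim c$. I would then split on whether $b \prec c$ or $b \sim c$ holds in the chain. The first case is immediate. In the second case, the class $[b]$ of $b$ contains the distinct element $c$, so quasilinearity forces $[b]$ to be $\precsim$-minimal; combined with $a \precsim b$ and the fact that nothing lies strictly below a minimal class, this yields $a \sim b$, and hence $a \sim b \sim c$.

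For the sufficiency I would argue by contraposition: assuming $\precsim$ is not quasilinear, I exhibit a total ordering extending $\precsim$ for which $\precsim$ is not single-plateaued. Failure of quasilinearity produces pairwise distinct $a, b, c \in X$ with $a \prec b \sim c$. The recipe is to pick (using the axiom of choice when needed) a total ordering $\leq_C$ on each equivalence class $C \in X/{\sim}$, taking care to arrange $b \leq_{[b]} c$ in the class $[b]$; then declare $x \leq y$ iff $x \prec y$, or $x \sim y$ and $x \leq_{[x]} y$. This lexicographic combination is a total ordering on $X$ extending $\precsim$, and by construction $a < b < c$. Yet $a \prec b$ and $b \sim c$ together rule out all three disjuncts $b \prec a$, $b \prec c$, and $a \sim b \sim c$ of single-plateauedness at $(a,b,c)$.

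The argument is essentially routine: the substantive content is the observation, built into the definition of quasilinearity, that a non-minimal class is a singleton, which propagates strict comparability along a $\precsim$-chain. The only verification that requires a bit of care is checking that the lexicographic construction in the sufficiency really is a total ordering extending $\precsim$, which is immediate from the definitions but worth spelling out.
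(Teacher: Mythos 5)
Your proposal is correct and follows essentially the same route as the paper: the necessity direction amounts to observing that for $a<b<c$ under an extending total ordering one gets $a\precsim b$ and $b\precsim c$, so a failure of single-plateauedness would force $a\prec b\sim c$; the sufficiency direction is the same contrapositive argument, producing a total ordering that extends $\precsim$ and places $a<b<c$. Your explicit lexicographic construction of that extension is a detail the paper leaves implicit, but it is not a different method.
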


\begin{proof}
(Necessity) Let $\precsim$ be a quasilinear weak ordering on $X$. Suppose that there exists a total ordering $\leq$ on $X$ that extends $\precsim$ and such that $\precsim$ is not single-plateaued for $\leq$. That is, there exist $a,b,c\in X$ such that $a<b<c$, $c\precsim b$, $a\precsim b$, and $\neg(a\sim b\sim c)$. Then we must have $a\prec b\sim c$, which contradicts quasilinearity.

(Sufficiency) Let $\precsim$ be a weak ordering on $X$ that is single-plateaued for any total ordering on $X$ that extends it. Suppose that $\precsim$ is not quasilinear; that is, there exist pairwise distinct $a,b,c\in X$ such that $a\prec b\sim c$. It is then clear that $\precsim$ is not single-plateaued for any total ordering $\leq$ on $X$ that extends $\precsim$ and such that $a<b<c$. Thus, we reach a contradiction.
\end{proof}

We now have the following proposition.

\begin{proposition}\label{prop:bi}
Let $F\colon X^2\to X$ be an operation. The following assertions are equivalent.
\begin{enumerate}
\item[(i)] $F$ is bisymmetric and quasitrivial.
\item[(ii)] $F\in\mathcal{F}$ and $\precsim_F$ is quasilinear.
\item[(iii)] $F\in\mathcal{F}$ and is $\leq$-preserving for every total ordering $\leq$ on $X$ that extends $\precsim_F$.
\end{enumerate}
If $X=X_n$ for some integer $n\geq 1$, then any of the assertions (i)--(iii) above is equivalent to any of the following ones.
\begin{enumerate}
\item[(iv)] $F\in\mathcal{F}_n$ and and there exists $\ell\in\{1,\ldots,n\}$ such that
    $$
    \mathrm{sgn}(F)~=~(\ell,\underbrace{1,\ldots,1}_{n-\ell}).
    $$
\item[(v)] $F\in\mathcal{F}_n$ and there exists $\ell\in\{1,\ldots,n\}$ such that
    $$
    |F^{-1}|~=~(\underbrace{\ell,\ldots,\ell}_{\ell},2\ell +1,2\ell +3,\ldots,2n-1).
    $$
\item[(vi)] $F$ is quasitrivial, order-preservable, and there exists $\ell\in\{1,\ldots,n\}$ such that
    $$
    |F^{-1}|~=~(\underbrace{\ell,\ldots,\ell}_{\ell},2\ell +1,2\ell +3,\ldots,2n-1).
    $$
\end{enumerate}
\end{proposition}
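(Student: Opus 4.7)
The plan is to organize the proof around the equivalences (i) $\Leftrightarrow$ (ii) $\Leftrightarrow$ (iii) in the general case, and then translate (ii) into the finite-case conditions (iv), (v), (vi). For (i) $\Rightarrow$ (ii), I would first invoke the fact recalled at the start of this section, that a bisymmetric quasitrivial operation is automatically associative, so $F\in\mathcal{F}$. Quasilinearity of $\precsim_F$ then follows by contradiction: if there were pairwise distinct $a,b,c\in X$ with $a\prec_F b\sim_F c$, Theorem~\ref{thm:kimura} would pin down $F$ on $\{a,b,c\}^2$ (it must equal $\max_{\precsim_F}$ between $\{a\}$ and $\{b,c\}$, and $\pi_1$ or $\pi_2$ on $\{b,c\}^2$); evaluating the bisymmetry equation at $(x,y,u,v)=(a,b,c,a)$ yields $F(b,c)=F(c,b)$, and both projection cases force $b=c$.

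For (ii) $\Rightarrow$ (i), Theorem~\ref{thm:kimura} provides the explicit form of $F$: a projection on the minimum class $C_1$ and $\max_{\precsim_F}$ elsewhere (the remaining classes being singletons by quasilinearity). Bisymmetry is then verified by a case analysis on the $\precsim_F$-maximum $M$ of the multiset $\{x,y,u,v\}$: if $M\notin C_1$ the element $M$ absorbs all $\precsim_F$-smaller arguments, so both sides collapse to $M$; if $M\in C_1$ all four arguments lie in $C_1$ and both sides evaluate to the same projection value. The equivalence (ii) $\Leftrightarrow$ (iii) is immediate from combining Propositions~\ref{prop:sf7dw} and \ref{prop:qlsp5}.

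For the finite case, (ii) $\Leftrightarrow$ (iv) is the definition of quasilinearity rephrased in terms of signatures (only $C_1$, of arbitrary size $\ell$, may be non-singleton), and (iv) $\Leftrightarrow$ (v) is a direct substitution into formula~\eqref{eq:sd541}. The implication (v) $\Rightarrow$ (vi) is clear since the signature $(\ell,1,\ldots,1)$ is $2$-quasilinear, so Proposition~\ref{prop:s76dfv} yields order-preservability.

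The hard part will be the remaining implication (vi) $\Rightarrow$ (v): from only quasitriviality, order-preservability, and the prescribed preimage sequence, one must derive associativity. My plan is to first use the ``moreover'' clause of Proposition~\ref{prop:s76dfv}, which holds for arbitrary operations, to replace $F$ by a conjugate $F_\sigma$ that is $\leq_n$-preserving, and then to proceed by induction on $n$. The base case $\ell=n$, where $|F^{-1}|=(n,\ldots,n)$, is handled by \cite[Prop.~3.11]{Dev} (already invoked in the anticommutative proof above), which forces $F\in\{\pi_1,\pi_2\}$. In the inductive step, the unique element $x$ with $|F^{-1}[x]|=2n-1$ must be absorbing: by quasitriviality only pairs containing $x$ can contribute to $F^{-1}[x]$ and there are exactly $2n-1$ such pairs, so all of them do. The restriction $F'$ of $F$ to $(X_n\setminus\{x\})^2$ is then quasitrivial, $\leq_n$-preserving on $X_n\setminus\{x\}$, and has the preimage sequence associated to $(\ell,1,\ldots,1)$ on $n-1$ elements, so the induction hypothesis gives $F'\in\mathcal{F}_{n-1}$; combined with the absorbing property of $x$, this yields $F\in\mathcal{F}_n$.
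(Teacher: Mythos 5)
Your proof is correct, but it follows a genuinely different route from the paper's. The paper's proof is essentially a pair of citations: the equivalence of (i), (ii), (iii), and (v) is imported wholesale from \cite[Theorem~3.6]{Dev}, and the only implication actually argued is (vi) $\Rightarrow$ (i), which is done by conjugating $F$ to a $\leq_n$-preserving operation via Proposition~\ref{prop:s76dfv} and Proposition~\ref{prop:FG1C} and then invoking \cite[Theorem~3.14]{Dev}. You instead re-derive the core equivalences from first principles: (i) $\Leftrightarrow$ (ii) by a direct bisymmetry computation on a configuration $a\prec b\sim c$ (using Kepka's result only to get associativity) and a case analysis on the $\precsim_F$-maximal argument for the converse; (ii) $\Leftrightarrow$ (iii) from Propositions~\ref{prop:sf7dw} and~\ref{prop:qlsp5}; and (vi) $\Rightarrow$ (v) by an induction on $n$ that peels off the absorbing element with preimage $2n-1$, so that the only external input is the much weaker \cite[Prop.~3.11]{Dev} for the degenerate case $\ell=n$. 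Both arguments share the conjugation step via the ``moreover'' clause of Proposition~\ref{prop:s76dfv}. What your approach buys is self-containedness --- the result no longer hinges on two substantial theorems of \cite{Dev} --- at the cost of length; the paper's version buys brevity. Two presentational points you should tidy up: the induction on $n$ should have $n=1$ as its formal base case, with $\ell=n$ treated as a non-inductive branch of the step; and the restriction of $F_\sigma$ to $X_n\setminus\{x\}$ should be explicitly relabelled (or order-isomorphically identified) with $X_{n-1}$ before the induction hypothesis is applied. Also, in (ii) $\Rightarrow$ (i) for infinite $X$, the minimal class $C_1$ may fail to exist, in which case $\precsim_F$ is a total ordering and $F=\max_{\precsim_F}$; your case analysis still covers this, but it is worth a sentence.
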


\begin{proof}
The equivalence among (i), (ii), (iii), and (v) was proved in \cite[Theorem~3.6]{Dev}. The equivalence (ii) $\Leftrightarrow$ (iv) and the implication ((iii) \& (v)) $\Rightarrow$ (vi) are trivial.

Let us now prove that (vi) $\Rightarrow$ (i). By Proposition~\ref{prop:s76dfv}, there exists $\sigma\in\mathfrak{S}_n$ such that $F_{\sigma}$ is $\leq_n$-preserving. Clearly, $F_{\sigma}$ is quasitrivial. Also, by Proposition~\ref{prop:FG1C} (using $\sigma$ to define the graph isomorphism) we have that
$$
|F_{\sigma}^{-1}|~=~(\underbrace{\ell,\ldots,\ell}_{\ell},2\ell +1,2\ell +3,\ldots,2n-1).
$$
Now, using \cite[Theorem 3.14]{Dev} it follows that $F_{\sigma}$ is bisymmetric and quasitrivial, and hence so is $F$.
\end{proof}

\begin{remark}
We observe that Proposition~\ref{prop:qlsp5} can also be easily established by using Theorem~\ref{thm:kimura} and Propositions~\ref{prop:sf7dw} and \ref{prop:bi}.
\end{remark}

Let us now consider enumeration problems. We first observe that the number of quasilinear weak orderings on $X_n$ was computed in \cite[Prop.\ 4.1]{Dev} and is known as Sloane's A002627. Also, the number of bisymmetric operations in $\mathcal{F}_n$ was computed in \cite[Prop.\ 4.2]{Dev} and is known as Sloane's A296943.

For any $F,G\in\mathcal{F}$ such that $\mathrm{sgn}(F)=\mathrm{sgn}(G)$, by Proposition~\ref{prop:bi} we have that $F$ is bisymmetric if and only if so is $G$. Thus, for any bisymmetric operation in $\mathcal{F}$, we can say that its signature and orbit are \emph{bisymmetric}.

For any integer $n\geq 0$, let $r_{\mathrm{b}}(n)$ be the number of bisymmetric orbits in $\mathcal{F}_n$ and let $s_{\mathrm{b}}(n)$ be the number of bisymmetric signatures in $\mathcal{F}_n$. By convention, we set $r_{\mathrm{b}}(0)=s_{\mathrm{b}}(0)=1$.

\begin{proposition}
We have $r_{\mathrm{b}}(n)=2n-1$ and $s_{\mathrm{b}}(n)=n$ for any $n\geq 1$.
\end{proposition}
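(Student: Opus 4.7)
The plan is to read off both counts directly from the structural description of bisymmetric operations already established in Proposition~\ref{prop:bi}, combined with the orbit-counting remark following Proposition~\ref{prop:d6fssf}.

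First I would note that, by the equivalence (i) $\Leftrightarrow$ (iv) of Proposition~\ref{prop:bi}, an operation $F\in\mathcal{F}_n$ is bisymmetric if and only if its signature has the form $\mathrm{sgn}(F)=(\ell,\underbrace{1,\ldots,1}_{n-\ell})$ for some $\ell\in\{1,\ldots,n\}$. Since these $n$ signatures are pairwise distinct, we immediately obtain $s_{\mathrm{b}}(n)=n$.

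For $r_{\mathrm{b}}(n)$, recall that bisymmetry is an invariant of signatures (as noted just before the statement), so the set of bisymmetric orbits is the disjoint union, over the $n$ bisymmetric signatures, of the orbits having that signature. By the remark following Proposition~\ref{prop:d6fssf}, the number of orbits in $\mathcal{F}_n$ corresponding to a given signature $(n_1,\ldots,n_k)$ is $\prod_{i:\,n_i\geq 2} 2$. For the signature $(1,1,\ldots,1)$ (i.e.\ $\ell=1$), this product is empty, yielding a single orbit. For each of the $n-1$ signatures $(\ell,1,\ldots,1)$ with $\ell\geq 2$, only the first block has size $\geq 2$, so the product equals $2$, yielding two orbits each. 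Summing gives
\[
r_{\mathrm{b}}(n) ~=~ 1 + 2(n-1) ~=~ 2n-1.
\]

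There is essentially no obstacle here: both identities are direct bookkeeping once Proposition~\ref{prop:bi} supplies the list of bisymmetric signatures and the remark after Proposition~\ref{prop:d6fssf} supplies the number of orbits per signature. The only tiny care point is to treat the case $\ell=1$ separately from $\ell\geq 2$ when applying the product formula $\prod_{i:\,n_i\geq 2} 2$.
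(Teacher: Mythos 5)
Your proof is correct, but it takes a different route from the paper's. The paper derives both counts by setting up first-order linear recurrences in the style of its other enumeration results: for $r_{\mathrm{b}}(n)$ it repeats the "strip off the top block $C_k$" argument of Proposition~\ref{prop:d6fssf} with the constraint $n_k\in\{1,n\}$ to get $r_{\mathrm{b}}(n)=2+r_{\mathrm{b}}(n-1)$, and for $s_{\mathrm{b}}(n)$ it identifies bisymmetric signatures with quasilinear weak orderings up to isomorphism and obtains $s_{\mathrm{b}}(n)=1+s_{\mathrm{b}}(n-1)$ as in Proposition~\ref{prop:prn}. You instead count directly: Proposition~\ref{prop:bi}(iv) lists the $n$ bisymmetric signatures $(\ell,1,\ldots,1)$ explicitly, which gives $s_{\mathrm{b}}(n)=n$ at once, and the orbits-per-signature formula $\prod_{i:\,n_i\geq 2}2$ from the remark after Proposition~\ref{prop:d6fssf} then yields $r_{\mathrm{b}}(n)=1+2(n-1)$. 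Your approach is shorter and avoids recurrences entirely, at the cost of leaning on that remark (which the paper states without a detailed proof, though it follows readily from Lemma~\ref{lemma:leqn}: each orbit contains exactly one ordinal sum of projections on $\leq_n$, and such an ordinal sum with a fixed signature is determined by the choice of projection on each block of size at least two); the paper's recurrence method is more uniform with the rest of its enumeration machinery and generalizes to the cases (such as $p_{\mathrm{op}}$ and $q_{\mathrm{op}}$) where no closed-form list of signatures is available. Your handling of the edge case $\ell=1$ versus $\ell\geq 2$ is the right care point and is done correctly.
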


\begin{proof}
We clearly have $r_{\mathrm{b}}(1)=1$. To compute $r_{\mathrm{b}}(n)$ for $n\geq 2$, we proceed exactly
as in the proof of Proposition~\ref{prop:d6fssf}, except that here we have $n_k\in\{1,n\}$. It follows
that the sequence $r_{\mathrm{b}}(n)$ satisfies the first order linear recurrence equation
$$
r_{\mathrm{b}}(n) = 2 + r_{\mathrm{b}}(n-1),\qquad n\geq 2.
$$
The explicit expression for $r_{\mathrm{b}}(n)$ then follows immediately.

Let us now consider the sequence $s_{\mathrm{b}}(n)$. We clearly have $s_{\mathrm{b}}(1)=1$. Let $n\geq 2$.
We know by Proposition~\ref{prop:444} that $s_{\mathrm{b}}(n)$ is also the number of quasilinear weak
orderings on $X_n$ that are defined up to an isomorphism. Thus, proceeding as in the proof of Proposition~\ref{prop:prn}, we see that the sequence $s_{\mathrm{b}}(n)$ satisfies the first order linear recurrence equation
$$
s_{\mathrm{b}}(n) = 1 + s_{\mathrm{b}}(n-1),\qquad n\geq 2.
$$
The explicit expression for $s_{\mathrm{b}}(n)$ then follows immediately.
\end{proof}

\section*{Acknowledgments}

This research is supported by the Internal Research Project R-AGR-0500 of the University of Luxembourg and the Luxembourg National Research Fund R-AGR-3080.

\end{document}